\def\rjk{R_{j,k}}
\def\rpq{R_{p,q}^k}
\def\rpl{R_{p,l}^j}
\def\gnk{G_{n,k}}
\def\agnk{ G(n,k)}
\def\agnj{ G(n,j)}
\def\gnj{G_{n,j}}
\def\rk{\bbr^k}
\def\rnk{\bbr^{n-k}}
\def\Cal{\mathcal}
\def\gnk{G_{n,k}}
\def\bbr{{\Bbb R}}
\def\bbd{{\Bbb D}}
\def\Pr{{\hbox{\rm Pr}}}
\def\gnk{G_{n,k}}
\def\rn{\bbr^n}
\def\rk{\bbr^k}
\def\rnk{\bbr^{n-k}}
\def\part{\partial}
\def\intl{\int\limits}
\def\b{\beta}
\def\Gam{\Gamma}
\def\a{\alpha}
\def\om{\omega}
\def\Del{\Delta}
\def\vp{\varphi}
\def\g{\gamma}
\def\gam{\gamma}
\def\sig{\sigma}
\def\lam{\lambda}
\def\z{\zeta}
\def\e{\varepsilon}
\def\t{\tau}
\def\nn{\nonumber}
\newtheorem{theorem}{Theorem}[section]
\newtheorem{lemma}[theorem]{Lemma}
\theoremstyle{definition}
\newtheorem{example}[theorem]{Example}
\theoremstyle{remark}
\newtheorem{remark}[theorem]{Remark}
\theoremstyle{corollary}
\newtheorem{corollary}[theorem]{Corollary}
\numberwithin{equation}{section}
\newcommand{\be}{\begin{equation}}
\newcommand{\ee}{\end{equation}}
\newcommand{\bea}{\begin{eqnarray}}
\newcommand{\eea}{\end{eqnarray}}
\newcommand{\Bea}{\begin{eqnarray*}}
\newcommand{\Eea}{\end{eqnarray*}}
\begin{document}
\title{Strichartz transforms with  Riesz potentials and Semyanistyi integrals}
\author []{ Yingzhan Wang}
\email{wyzde@gzhu.edu.cn}
\address{School of Mathematics and Information Science, Guangzhou University, Guangzhou, China}

\thanks{This work  was supported by the Project of  Guangzhou  Science and Technology Bureau (No. 202102010402).}.

\subjclass[2000]{ 44A12, 47G10}

\keywords{Radon transforms,Grassmann manifolds,Riesz potentials, Semyanistyi integrals}

\begin{abstract}In this paper, we study  the general orthogonal Radon transform $R_{p,q}^k$ first studied by R.S Strichartz in \cite{Stri}. An
   sharp    existence condition of $R_{p,q}^k f$ on $L^p$-spaces will be given. Then we devote to   the relation formulas connecting  Strichartz transform     $R_{p,q}^k$ and   Semyanistyi integrals. We   prove the corresponding  Fuglede type formulas, through which
   a number of  explicit inversion formulas for $R_{p,q}^k f$ will be given. Different from the ``inclusion" Radon transform and ``Gonzalez" type orthogonal  transform, Strichartz transform is more complicated. Our conclusions generalize the corresponding results of the  two particular cases above.
\end{abstract}
\maketitle
\renewcommand{\sectionmark}[1]{}

\section{Introduction}Problems of integral geometry related to Grassmann manifolds constitute a core of different branches of mathematics and arise in many applications, which can be found in the book by Helgason \cite{He}, Gonzalez\cite{G3}
and Rubin\cite{Ru3}, containing many references on this subject.
Here we concern the orthogonal Radon transform on affine Grassmannians  first studied by R.S Strichartz in \cite{Stri}. A background information about this kind of transforms can be found in  \cite{G1,G2,G4,GK,OR,Ru1,Ru2,RW1,RW2,Sem} etc..  Let $\agnj$ and $\agnk$ be a pair of Grassmannian bundles of $j$-dimensional and $k$-dimensional affine planes
in $\rn$.
For $p\ge 0$, $q\ge 0$, $l\ge 0$, that satisfying  $p+q=j$ and $p+l=k$,
we call $j$-plane $\t\in \agnj$ and $k$-plane $\z\in \agnk$  incident if $\t$  intersect
$\z$ orthogonally in a $p$-plane.
Let $
\hat{\z}$ be the set of all $j$-planes incident to $\z$ and $\hat{\t}$ the set of $k$-planes incident to $\t$. Then for
 a good function $f$ on $\agnj$, the orthogonal Radon transform $\rpq f$ is a function  on $\agnk$, where the value $\rpq f(\z)$, $\z\in \agnk$ is defined as an integral of $f$ over the set of all $j$-planes incident to $\z$. Similarly, for a good function $g$ on $\agnk$, the corresponding Radon transform $\rpl g(\t)$ is a function on $\agnj$ that integrates $g$ over the set of all $k$-planes incident to $\t$.
  Formly,
\bea\label{defor}
\rpq f(\z)=\intl_{\hat{\z}}f(\t)d\t\,, \quad \rpl g(\t)=\intl_{\hat{\t}}f(\z)d\z\,,
\eea
where $d\t$  and $d\z$ are the invariant measure, see (\ref{rpq}), (\ref{rpl}) for precise meaning.

There
have been numerous   publications  devoted to its two particular  cases. When $q=0$, our transform reduces to the case of  ``inclusion" transform $\rjk$  studied by Gonzalez and Kakehi \cite{GK}, which takes  functions on the Grassmannian of $j$-dimensional affine planes in $\rn$ to functions on a similar manifold of $k$-dimensional planes by integration over the set of all $j$-planes that contained in a given $k$-plane. Under the condition $k-j$ is even, they  studied the range characterization  and inversion problems, using the  Lie algebra language  on only smooth rapidly decreasing
functions.   A sharp existence condition for this  transform was given by B. Rubin \cite{Ru2}, where Rubin inverted these transforms  in the framework of Lebesgues
spaces for arbitrary $k-j>0$ provided that these operators are injective. But the method  of  stereographic projection makes all formulas more complicated because of inevitable
weight factors. The inversion formulas for continuous functions and general $L^p$
functions were obtained by us directly through part Radon transform \cite{RW1}, which is a quite different approach.

 The case of  $p=0$  reduces to the $orthogonal$ transform $R_j^k$ studied by Gonzalez \cite{G1,G2}, which integrates a function on $\agnj$ over all the $j$-planes orthogonal to a $k$-plane with only one point  intersection. (In the following statements, we call it ``Gonzalez" transform.) These publications deal with the inversion formulas of the  transforms corresponding to smooth functions, where Gonzalez also studied  the intertwining relations connecting  $R_j^k$ and invariant differential operators. The general intertwining relationship between these transforms and fractional integral operators were studied by the author and Rubin (see \cite{RW2,RW3,RW4}). Our approach  extends to the case of  ``inclusion" Radon transform talked above.  In the present paper, we will investigate the general Strichartz type Radon transforms in which the two corresponding orthogonal planes have a $p$-plane intersection,  under the
assumption that
\be p>0,\quad q>0,\quad l>0,\quad p+q+l<n\,.\nonumber\ee
For this general case, Strichartz \cite{Stri} mainly focused on  its harmonic properties. In \cite{G4}, Gonzalez studied the intertwining relations between Strichartz transforms and some invariant differential operators, where only smooth functions with compact support are considered. Here, our main concerns  are  (a) sharp conditions for
$f$ under which $\rpq f$ exists in the Lebesgue sense, (b) functional relations
connecting $\rpq$ with Riesz potentials,   Radon-John $k$-plane transforms,  Semyanistyi integrals and (c)
explicit inversion formulas for arbitrary dimensions $j$ and $k$.

Our paper is organized as follows. Section 2 contains some basic notations about fractional integral operators and Radon-John transforms, ``inclusion" Radon transforms, ``Gonzalez" transforms and Strichartz transforms, etc.. Some known necessary   intertwining  relations  connecting   them will be introduced.  In section 3, we mainly study   Strichartz transforms on radial functions, through which a sharp existence condition for this transform will be given in section 4. In the last section, we devote to the  intertwining relations connecting  Strichartz transforms, Semyanistyi integrals and potential integral  operators. We  prove the Fuglede type theorems, through which some inversion formulas can be obtained  when function $f$  belongs to the range of the $j$-plane transform.

\section{Preliminaries}

\subsection{Riesz Potentials and Erd\'{e}lyi--Kober
Fractional Integrals}
In this part, we introduce the basic knowledge about the  Riesz potential operators first, which can be regard as a negative power of the minus-Laplacian
\be\label{sdyt1} (I_n^\a) \sim (-\Del_n)^{-\a/2}\,,\quad \Del_n=\frac{\part^2}{\part x_1^2}+\frac{\part^2}{\part x_2^2}+...+\frac{\part^2}{\part x_n^2}\,.\ee
Explicitly, for a good function $f$ on $\rn$, the Riesz potential of a positive $\a$ is defined by
\be\label{rpot} (I_n^\a f)(x)=\frac{1}{\gamma_n(\a)}
\intl_{\bbr^n}
 \frac{f(y)\,dy}{|x-y|^{n-\a}},\qquad
  \gamma_n(\a)=
  \frac{2^\a\pi^{n/2}\Gamma(\a/2)}{\Gamma((n-\a)/2)};
  \ee
\[ \a \neq n, n+2, n+4, \ldots \,.\]
If $f\in L^p (\rn)$, $1\le p <n/\a$, then $(I_n^\a f)(x)<\infty$ for almost all $x$, and the bounds for $p$ are sharp; see \cite[Chapter III]{Ru3}.  For the left inverse of $I_n^\a$, denoted by $\bbd_n^\a$, numerous investigations are devoted to this
question; see [Rul, SKM] and references therein.
 \begin{theorem}\cite[Theorem 3.41]{Ru3}Suppose $\vp=I_n^\a f$, where $f\in L^p (\rn)$, $1\le p <n/\a$.  If $\mu$ is a radial finite complex Borel measure on $\rn$, satisfying
 \bea
 &&\intl_{|x|>1}|x|^\b d|\mu|(x)<\infty,\quad \text{for some } \b>\a\,;\\
 &&\intl_{\rn}|x|^j d\mu(x)=0,\quad \text{for } |j|=0,2,4,...2[\a/2];\\
 &&d_{\mu}(a)=\frac{\pi^{n/2} 2^{-\a}\varkappa(\a)}{\sig_{n-1}\Gam((n+\a)/2)}\neq 0
 \eea
 where \bea
 \varkappa(\a)&=&\intl_0^\infty t^{-\a/2-1}dt\intl_{\rn}e^{-t|y|^2}d\mu(y) \nonumber\\
{} \nonumber\\
&=& \left\{ \!
 \begin{array} {ll} \! \Gamma (-\a/2) \intl_{\rn}|y|^\a\,d\mu(y) \!  & \mbox{if $ \a \neq 2, 4, 6, \ldots $,}\\
{}\\
\displaystyle{\frac{2 (-1)^{\a/2+1}}{(\a/2)!}\intl_{\rn}|y|^\a\log|y|\,d\mu(y) \!} & \mbox{if $ \a = 2, 4, 6, \ldots \, ,$}
\end{array}
\right. \nonumber\eea
 then
\be\label{invs1a}(\bbd_n^\a
\vp)(x)\!\equiv\!\frac{1}{d_{\mu}(\a)}\intl_{0}^\infty\!
\frac{(\vp*\mu_t)(x)}{t^{1+ \a}}\, dt\!=\!
\lim\limits_{\e \to
0}\frac{1}{d_{\mu}(\a)}\intl_{\e}^\infty\!
\frac{(\vp*\mu_t)(x)}{t^{1+ \a}}\, dt \,,\ee
represents the left inverse of Riesz potential $I_n^\a f$, that is, $\bbd_n^\a\,I_n^\a\,f\,=\,f\,.$ The limit in (\ref{invs1a}) exists in the $L^p$ norm and in the almost everywhere sense. If $f\in C_0(\rn)\cap L^p(\rn)$, this limit is uniform on $\rn$.
 \end{theorem}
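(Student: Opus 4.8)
The plan is to realize the operator $\bbd_n^\alpha$ as convolution against a dilation family and then pin down the normalising constant by a Gaussian computation. Writing $\mu_t$ for the dilate of $\mu$ with $\widehat{\mu_t}(\xi)=\hat\mu(t\xi)$ and using that $I_n^\alpha$ commutes with convolutions, one has $\vp*\mu_t=(I_n^\alpha f)*\mu_t=f*(I_n^\alpha\mu_t)$, and a Fourier-transform check gives the scaling relation $I_n^\alpha\mu_t=t^\alpha K_t$, where $K=I_n^\alpha\mu$ and $K_t(x)=t^{-n}K(x/t)$. Hence each truncation is a convolution,
\[
\intl_\e^\infty\frac{(\vp*\mu_t)(x)}{t^{1+\alpha}}\,dt=(f*W_\e)(x),\qquad W_\e(x)=\intl_\e^\infty t^{-1}K_t(x)\,dt,
\]
and the substitution $t=\e\tau$ shows $W_\e(x)=\e^{-n}W_1(x/\e)$ with $W_1(x)=\intl_1^\infty t^{-1-n}K(x/t)\,dt$. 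Thus $\{W_\e\}$ is an honest dilation family, and the theorem reduces to proving that $f*W_\e\to d_\mu(\alpha)\,f$ as $\e\to0$ in $L^p$, almost everywhere, and uniformly when $f\in C_0(\rn)\cap L^p(\rn)$.

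First I would compute the Fourier multiplier. Since $\mu$ is radial, so is $\hat\mu$, and
\[
\widehat{W_\e}(\xi)=|\xi|^{-\alpha}\intl_\e^\infty t^{-1-\alpha}\hat\mu(t\xi)\,dt=\intl_{\e|\xi|}^\infty s^{-1-\alpha}\hat\mu(s\omega)\,ds\xrightarrow[\e\to0]{}\intl_0^\infty s^{-1-\alpha}\hat\mu(s\omega)\,ds=:c,
\]
after the substitution $s=t|\xi|$; the limit is a constant independent of $\xi=|\xi|\omega$. Convergence of the $s$-integral at $s=0$ is exactly where the moment hypotheses enter: radiality kills the odd Taylor coefficients of $\hat\mu$, while the conditions $\intl_{\rn}|x|^{2m}\,d\mu=0$ for $2m\le 2[\alpha/2]$ annihilate the even ones up to order $2[\alpha/2]$, so $\hat\mu(s\omega)=O(s^{2[\alpha/2]+2})$ and the integrand is $O(s^{2[\alpha/2]+1-\alpha})$ with exponent exceeding $-1$; convergence at $s=\infty$ is immediate from $|\hat\mu|\le|\mu|(\rn)$ and $\alpha>0$. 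The growth bound $\intl_{|x|>1}|x|^\beta d|\mu|<\infty$ with $\beta>\alpha$ supplies the derivatives of $\hat\mu$ needed for the Taylor estimate. In particular $\widehat{W_\e}(0)=c$, so $\int W_1=c$.

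Next I would identify $c$ with $d_\mu(\alpha)$ through the Gaussian representation underlying $\varkappa(\alpha)$. By the multiplication formula applied to the Gaussian $u_t(\xi)=(4\pi t)^{-n/2}e^{-|\xi|^2/4t}$, whose transform is $e^{-t|y|^2}$, one gets $\intl_{\rn}e^{-t|y|^2}d\mu(y)=(4\pi t)^{-n/2}\intl_{\rn}e^{-|\xi|^2/4t}\hat\mu(\xi)\,d\xi$. Inserting this into the definition of $\varkappa(\alpha)$, interchanging the integrations and evaluating the inner $t$-integral by the Gamma function yields
\[
\varkappa(\alpha)=2^\alpha\pi^{-n/2}\Gamma\Big(\frac{n+\alpha}{2}\Big)\intl_{\rn}\hat\mu(\xi)\,|\xi|^{-n-\alpha}\,d\xi .
\]
Passing to polar coordinates and using radiality, $\intl_{\rn}\hat\mu(\xi)|\xi|^{-n-\alpha}d\xi=\sigma_{n-1}\intl_0^\infty s^{-1-\alpha}\hat\mu(s\omega)\,ds=\sigma_{n-1}\,c$, which rearranges to precisely $c=\pi^{n/2}2^{-\alpha}\varkappa(\alpha)/(\sigma_{n-1}\Gamma((n+\alpha)/2))=d_\mu(\alpha)$. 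Hence $\int W_1=d_\mu(\alpha)$ and $\{W_\e/d_\mu(\alpha)\}$ is a candidate approximate identity.

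The main obstacle is the final convergence step, i.e. upgrading the formal limit $\widehat{W_\e}\to d_\mu(\alpha)$ to genuine $L^p$ and almost-everywhere convergence of $f*W_\e$. Because $K=I_n^\alpha\mu$ decays only like $|x|^{\alpha-n}$ a priori, the kernel $W_1$ is borderline and no unconditional $L^1$ bound follows from the triangle inequality; it is exactly the moment cancellations (starting from $\mu(\rn)=0$) that force enough extra decay of $K$, and hence of $W_1$, to make $\{W_\e\}$ an approximate identity in the required sense. The rigorous treatment therefore rests on controlling $W_1$ together with the maximal operator $\sup_{\e>0}|f*W_\e|$ via the Hardy--Littlewood maximal function, which is the technical heart of Rubin's wavelet-transform machinery; once this is in place, $L^p$ convergence, almost-everywhere convergence, and uniform convergence on $C_0(\rn)\cap L^p(\rn)$ all follow, giving $\bbd_n^\alpha I_n^\alpha f=f$.
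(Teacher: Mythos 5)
The paper offers no proof of this statement to compare against: it is quoted (with a few typographical slips) directly from Rubin's book as \cite[Theorem 3.41]{Ru3}. Measured against that source, your outline reproduces the correct architecture of the actual proof: the reduction of the truncated integral to a convolution $f*W_\e$ with a single dilation family, the multiplier computation $\lim_{\e\to0}\widehat{W_\e}(\xi)=\int_0^\infty s^{-1-\a}\hat\mu(s\om)\,ds$, and the Gaussian identification of this constant with $d_\mu(\a)$ are all sound, and the last of these is carried out correctly in detail. One small inaccuracy: the hypotheses do not give $\hat\mu(s\om)=O(s^{2[\a/2]+2})$, since the only moment of $|\mu|$ guaranteed finite beyond order $2[\a/2]$ is of order $\b$, and $\b$ may well be smaller than $2[\a/2]+2$; what one actually obtains, via a fractional Taylor remainder, is $\hat\mu(s\om)=O(s^{\gamma})$ with $\gamma=\min(\b,\,2[\a/2]+2)>\a$, which still makes the $s$-integral converge at the origin.

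The genuine gap is the step you explicitly defer. Everything you establish is a statement about Fourier multipliers, whereas the theorem asserts convergence in $L^p$, almost everywhere, and uniformly for $f\in C_0(\rn)\cap L^p(\rn)$. For that you must prove that $W_1\in L^1(\rn)$ with $\int W_1=d_\mu(\a)$ and that $|W_1|$ admits an integrable radial decreasing majorant, so that $\sup_{\e>0}|f*W_\e|$ is dominated by the Hardy--Littlewood maximal function and the standard approximate-identity theorems apply. This requires genuine pointwise bounds on $K=I_n^\a\mu$: roughly $|K(x)|\lesssim|x|^{\a-n}$ near the origin and $|K(x)|\lesssim|x|^{\a-n-\gamma}$ near infinity, the latter obtained by expanding $|x-y|^{\a-n}$ about $y=0$ and exploiting the vanishing moments together with $\int_{|x|>1}|x|^\b\,d|\mu|<\infty$; the same absolute-convergence estimates are what legitimize the Fubini interchange hidden in $\vp*\mu_t=f*(I_n^\a\mu_t)$, since $\mu$ is only a measure and $I_n^\a\mu$ is a priori neither bounded nor integrable. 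You correctly locate this as the technical heart, but pointing to ``Rubin's wavelet-transform machinery'' in place of executing it leaves the proposal an outline rather than a proof: as written, a reader could not verify the $L^p$ or a.e.\ convergence claims from your text alone.
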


The inversion formula (\ref{invs1a}) is non-local. If $\a$ is an even integer, then the local inversion formula
 $(-\Del_n)^{\a/2}I_n^\a f =f$ is available under additional smoothness assumptions for $f$.

We also work with Erd\'{e}lyi-Kober type
fractional integrals of  positive order $\a$, which arise in  integral geometry and many other applications of Fractional Calculus. Detailed information can be found in
\cite[Section 2]{Ru3}. The well known two Erd\'{e}lyi-Kober type
fractional integrals have the following expressions,
\bea
\label{as34b12}%
(I^{\a}_{+, 2} f)(t)
&=&\frac{2}{\Gam
(\a)}\intl_{0}^{t} (t^{2} -r^{2})^{\a-1}f (r) \, r\, dr,\\
\label{eci}
(I^{\a}_{-, 2} f)(t)
&=&\frac{2}{\Gam
(\a)}\intl_{t}^{\infty}(r^{2} - t^{2})^{\a-1}f (r) \, r\,
dr.\quad
\eea
For these operators, we have the following existence theorems.
\begin{lemma}\cite[Lemma 2.42]{Ru3}
\label{lifa2}\

\textup{(i)} The integral $(I^{\a}_{+, 2} f)(t)$ is absolutely
convergent for almost all $t>0$ whenever $r\to rf(r)$ is a locally
integrable function on $\bbr_{+}$.

\textup{(ii)} If

\begin{equation} %
\label{for10z}
\intl_{a}^{\infty}|f(r)|\, r^{2\a-1}\, dr <\infty,\qquad a>0,
\end{equation}
then $(I^{\a}_{-, 2} f)(t)$ is finite for almost all $t>a$. If
$f$ is non-negative, locally integrable on $[a,\infty)$, and
(\ref{for10z}) fails, then $(I^{\a}_{-, 2} f)(t)=\infty$ for every
$t\ge a$.
\end{lemma}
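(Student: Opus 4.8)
The plan is to reduce both Erd\'{e}lyi--Kober integrals to ordinary one‑dimensional Riemann--Liouville fractional integrals by the substitution $s=r^2$, $u=t^2$, and then run Tonelli's theorem on bounded subintervals. Writing $g(s)=f(\sqrt s)$ and using $ds=2r\,dr$, the factor $2$ in (\ref{as34b12})--(\ref{eci}) cancels and the definitions become
\[
(I_{+,2}^\a f)(t)=\frac{1}{\Gam(\a)}\intl_0^{u}(u-s)^{\a-1}g(s)\,ds,\qquad (I_{-,2}^\a f)(t)=\frac{1}{\Gam(\a)}\intl_{u}^{\infty}(s-u)^{\a-1}g(s)\,ds,
\]
with $u=t^2$. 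Under this change of variables the hypothesis of (i), that $r\mapsto rf(r)$ be locally integrable, is exactly the assertion that $g\in L^1_{loc}([0,\infty))$, since $\intl_0^{T^2}|g(s)|\,ds=\intl_0^T|f(r)|\,2r\,dr$; and the condition (\ref{for10z}) becomes $\intl_{a^2}^\infty |g(s)|\,s^{\a-1}\,ds<\infty$, because $r^{2\a-1}\,dr=\tfrac12 s^{\a-1}\,ds$.

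For part (i), I would fix $b>0$ and estimate the integral over $t\in(0,b)$, i.e. over $u\in(0,b^2)$. By Tonelli's theorem,
\bea
\intl_0^{b^2}\intl_0^u (u-s)^{\a-1}|g(s)|\,ds\,du
&=&\intl_0^{b^2}|g(s)|\Big(\intl_s^{b^2}(u-s)^{\a-1}\,du\Big)ds \nonumber\\
&=&\frac{1}{\a}\intl_0^{b^2}|g(s)|\,(b^2-s)^\a\,ds
\le \frac{b^{2\a}}{\a}\intl_0^{b^2}|g(s)|\,ds, \nonumber
\eea
which is finite because $g\in L^1_{loc}$. Hence the inner integral is finite for almost every $u\in(0,b^2)$, and letting $b\to\infty$ gives absolute convergence for almost all $t>0$.

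For the finiteness assertion in (ii), the same computation applies on an interval $a^2<A<B$. After swapping orders, the $s$-range splits into $(A,2B)$, on which the inner $u$-integral $\intl_A^{\min(s,B)}(s-u)^{\a-1}\,du$ is bounded by a constant $M_{\a,A,B}$, and $[2B,\infty)$, on which $s-A$ and $s-B$ are comparable to $s$ so that $\intl_A^B(s-u)^{\a-1}\,du\le C_{\a,A,B}\,s^{\a-1}$. Since $g$ is locally integrable on $[A,2B]$ — which follows from $\intl_{a^2}^\infty|g|s^{\a-1}<\infty$ together with the positive lower bound of $s^{\a-1}$ there (here $A>a^2>0$ is used) — both pieces are finite, so $(I_{-,2}^\a f)(t)<\infty$ for almost every $u\in(A,B)$, hence for almost all $t>a$.

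The divergence statement is the most delicate part, and it is where the hypotheses $f\ge0$ and $a>0$ enter. Fix $t\ge a$, so $u=t^2\ge a^2>0$. For $s\ge 2u$ one has $s-u\ge s/2$, which gives the one‑sided bound $(s-u)^{\a-1}\ge c_\a\,s^{\a-1}$ with $c_\a=\min(1,2^{1-\a})>0$ (the cases $\a\ge1$ and $0<\a<1$ differing only by which comparison of $s-u$ with $s$ is used). Consequently
\[
\Gam(\a)\,(I_{-,2}^\a f)(t)\ge \intl_{2u}^\infty (s-u)^{\a-1}g(s)\,ds\ge c_\a\intl_{2u}^\infty s^{\a-1}g(s)\,ds.
\]
Local integrability of $g$ makes $\intl_{a^2}^{2u}s^{\a-1}g(s)\,ds$ finite, so the failure of (\ref{for10z}), namely $\intl_{a^2}^\infty s^{\a-1}g(s)\,ds=\infty$, forces $\intl_{2u}^\infty s^{\a-1}g(s)\,ds=\infty$ and hence $(I_{-,2}^\a f)(t)=\infty$. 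The main obstacle throughout is the bookkeeping of the kernel's behaviour $(s-u)^{\a-1}$ relative to $s^{\a-1}$ as $s\to\infty$, which must be carried out with the correct direction of inequality in each part and uniformly in the relevant range of $u$.
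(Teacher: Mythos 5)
The paper does not prove this lemma; it is quoted verbatim from \cite[Lemma 2.42]{Ru3}, so there is no in-paper argument to compare against. Your proof is correct: the substitution $s=r^2$ faithfully converts the hypotheses (local integrability of $rf(r)$ becomes $g\in L^1_{loc}$, and (\ref{for10z}) becomes $\int_{a^2}^\infty|g(s)|s^{\a-1}ds<\infty$), the Tonelli computations for (i) and for the finiteness part of (ii) are sound, and the divergence argument correctly uses $f\ge 0$, the positivity of $a$, and the two-sided comparison of $(s-u)^{\a-1}$ with $s^{\a-1}$ on $s\ge 2u$. This is essentially the standard Riemann--Liouville reduction plus Fubini--Tonelli argument one finds in the cited source.
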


Fractional derivatives of the Erd\'{e}lyi--Kober type are defined as the
left inverses ${\Cal D^{\a}_{\pm, 2} = (I^{\a}_{\pm,
2})^{-1}}$ and have different analytic expressions. For example, if
$\alpha= m + \alpha_{0}, \; m = [\alpha], \; 0 \le\alpha_{0} < 1$,
then, formally,
\begin{equation} %
\label{frr+z}
\Cal D^{\a}_{\pm, 2} \vp=(\pm D)^{m +1}\, I^{1 - \alpha_{0}}_{
\pm, 2}\vp, \qquad D=\frac{1}{2t}\,\frac{d}{dt}.
\end{equation}
More precisely,  the following statements hold.

\begin{theorem}\label{78awqe555} Let $\vp= I^{\a}_{+, 2} f$, where $rf(r)$ is locally integrable on $\bbr_{+}$.
Then $f(t)= (\Cal D^{\a}_{+, 2} \vp)(t)$ for
almost all $t\in\bbr_{+}$, as in (\ref{frr+z}).
\end{theorem}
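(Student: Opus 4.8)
The plan is to verify that the operator $\Cal D^{\a}_{+, 2}$ defined by $(\pm D)^{m+1} I^{1-\alpha_0}_{+,2}$ is a genuine left inverse of $I^{\a}_{+, 2}$, which amounts to establishing the semigroup property of the Erd\'{e}lyi--Kober integrals together with the fundamental-theorem-of-calculus relation for the differential operator $D=\frac{1}{2t}\frac{d}{dt}$. The natural change of variables is $s=t^2$, $\rho=r^2$, under which $(I^{\a}_{+,2}f)(t)$ becomes (up to the constant $1/\Gam(\a)$) the ordinary Riemann--Liouville fractional integral of order $\a$ applied to the function $g(\rho)=f(\sqrt{\rho})$ on $\bbr_+$, and $D$ becomes $\frac{d}{ds}$. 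Thus the whole statement is transported to the classical one-dimensional theory, where the corresponding result (that the Riemann--Liouville derivative inverts the Riemann--Liouville integral on locally integrable functions) is standard.

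First I would carry out the substitution carefully and record the identity $(I^{\a}_{+,2}f)(t)=(\frak{I}^{\a}_{0+}g)(t^2)$, where $\frak{I}^{\a}_{0+}$ denotes the left-sided Riemann--Liouville integral; here the hypothesis that $r\mapsto rf(r)$ is locally integrable is exactly what guarantees that $g$ is locally integrable on $\bbr_+$, so that $\frak{I}^{\a}_{0+}g$ is well defined for almost all arguments by part (i) of Lemma~\ref{lifa2}. Next I would establish the semigroup law $I^{1-\alpha_0}_{+,2}\,I^{\a}_{+,2}=I^{1-\alpha_0+\a}_{+,2}=I^{m+1}_{+,2}$, which again reduces to the classical additivity $\frak{I}^{\mu}_{0+}\frak{I}^{\nu}_{0+}=\frak{I}^{\mu+\nu}_{0+}$; this last step is justified by Fubini's theorem after the substitution, and the local integrability of $g$ ensures the iterated integrals converge absolutely. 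Since $1-\alpha_0+\a=m+1$ is a positive integer, the operator $I^{m+1}_{+,2}$ is now an ordinary $(m+1)$-fold iterated integral in the variable $s=t^2$.

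It then remains to show that applying $D^{m+1}$ recovers $f$. After the change of variables this is precisely the statement that $\bigl(\tfrac{d}{ds}\bigr)^{m+1}\frak{I}^{m+1}_{0+}g=g$ almost everywhere, which is the classical fundamental theorem of calculus iterated $m+1$ times for a locally integrable $g$. Transporting back through $s=t^2$ and using that $D=\frac{d}{ds}$ under this substitution yields $(\Cal D^{\a}_{+,2}\vp)(t)=f(t)$ for almost all $t\in\bbr_+$, as claimed.

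The main obstacle I anticipate is not conceptual but one of justifying the formal manipulations under the weak hypothesis $rf(r)\in L^1_{loc}$: specifically, verifying that $\frak{I}^{1-\alpha_0}_{0+}\frak{I}^{\a}_{0+}g$ is absolutely continuous (so that the differentiation $D^{m+1}$ is legitimate almost everywhere and the boundary terms in the repeated integrations by parts vanish), and handling the endpoint behavior near $t=0$ where the kernel $(t^2-r^2)^{\a-1}$ may be singular. These are exactly the points covered by the general Erd\'{e}lyi--Kober theory in \cite[Section 2]{Ru3}, so the cleanest presentation is to reduce everything to the Riemann--Liouville case by the substitution above and then invoke the established one-dimensional inversion result, rather than reprove absolute continuity from scratch.
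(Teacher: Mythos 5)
The paper does not actually prove this theorem: it is quoted as background (the counterpart of the cited \cite[Theorem 2.44]{Ru3} for the operator $I^{\a}_{-,2}$), so there is no in-paper argument to compare against. Your proof is correct and is, in substance, the standard way such statements are established in \cite[Section 2]{Ru3}. The key identity $(I^{\a}_{+,2}f)(t)=(I^{\a}_{0+}g)(t^2)$ with $g(\rho)=f(\sqrt{\rho})$ is exact (the factor $2$ in (\ref{as34b12}) is absorbed by $d\rho=2r\,dr$, so the normalization $1/\Gam(\a)$ comes out right), the hypothesis $rf(r)\in L^1_{loc}(\bbr_+)$ translates precisely into $g\in L^1_{loc}$, and $D=\tfrac{1}{2t}\tfrac{d}{dt}$ becomes $\tfrac{d}{ds}$ under $s=t^2$. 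From there the semigroup law $I^{1-\a_0}_{+,2}I^{\a}_{+,2}=I^{m+1}_{+,2}$ is the Beta-function/Fubini computation for Riemann--Liouville integrals of a locally integrable function, and $D^{m+1}I^{m+1}_{+,2}f=f$ a.e.\ is the iterated fundamental theorem of calculus: the first $m$ derivatives exist classically and the last one holds almost everywhere by Lebesgue differentiation, which also disposes of the absolute-continuity concern you raise at the end. One small point worth making explicit if you write this up: the case $\a_0=0$ (integer $\a$) is included, since then $1-\a_0=1$ and the argument runs unchanged with $I^{m+1}_{+,2}$ an honest $(m+1)$-fold integral.
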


\begin{theorem}\cite[Theorem 2.44]{Ru3}
\label{78awqe} If $f$ satisfies (\ref{for10z})
for every $a>0$ and
$\vp\!= \!I^{\a}_{-, 2} f$, then $f(t)= (\Cal D^{\a}_{-, 2} \vp)(t)$ for
almost all $t\in\bbr_{+}$, where $\Cal D^{\a}_{-, 2} \vp$ can be represented as follows.

\noindent
\textup{(i)} If $\a=m$ is an integer, then
\begin{equation} %
\label{90bedr}
\Cal D^{\a}_{-, 2} \vp=(- D)^{m} \vp,
\qquad D=\frac{1}{2t}\,\frac{d}{dt}.
\end{equation}

\noindent
\textup{(ii)} If $\alpha= m +\alpha_{0}, \; m = [ \alpha], \; 0 <
\alpha_{0} <1$, then
%
\begin{equation} %
\label{frr+z33}
\Cal D^{\a}_{-, 2} \vp= t^{2(1-\a)} (- D)^{m +1}
t^{2\a}\psi, \quad\psi=I^{1-\a+m}_{-,2} \,t^{-2m-2}\,
\vp.
\end{equation}
In particular, for $\a=k/2$, $k$ odd,
\begin{equation} %
\label{frr+z3}
\Cal D^{k/2}_{-, 2} \vp= t\,(- D)^{(k+1)/2} t^{k}I^{1/2}_{-,2}
\,t^{-k-1}\,\vp.
\end{equation}
\end{theorem}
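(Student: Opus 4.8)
The plan is to linearize the quadratic structure of $I^\a_{-,2}$ by the substitution $u=t^2,\ s=r^2$, reduce everything to the classical one-dimensional right-sided Riemann--Liouville calculus, invert there, and transplant the answer back. Writing $g(s)=f(\sqrt s)$ and $\tilde\vp(u)=\vp(\sqrt u)$, the relation $r\,dr=\tfrac12\,ds$ turns (\ref{eci}) into
\[
\tilde\vp(u)=\frac{1}{\Gam(\a)}\intl_u^\infty (s-u)^{\a-1}g(s)\,ds=:(\I_-^\a g)(u),
\]
the right-sided Riemann--Liouville integral of order $\a$, and the hypothesis (\ref{for10z}) becomes precisely $\intl^\infty|g(s)|\,s^{\a-1}\,ds<\infty$, which is the sharp condition guaranteeing that $\I_-^\a g$ is finite for almost all $u$ (this is exactly Lemma \ref{lifa2}(ii) read in the $u$-variable). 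The single identity driving all the bookkeeping is that, under $u=t^2$, the operator $D=\frac{1}{2t}\frac{d}{dt}$ becomes exactly $d/du$; hence $(-D)^j$ transplants to $(-d/du)^j$ and each $I^\b_{-,2}$ transplants to $\I_-^\b$ in the $u$-variable. Proving the theorem thus amounts to producing left inverses of $\I_-^\a$ built from $-d/du$ and checking that the stated formulas are their transplants.

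For part (i), $\a=m\in\bbn$, the function $\I_-^m g$ is the $m$-fold iterated integral $\intl_u^\infty\!\!\intl_{u_1}^\infty\!\cdots\intl_{u_{m-1}}^\infty g$, which (since $g$ is integrable on each $(u,\infty)$ by the tail condition) is $C^{m-1}$ with absolutely continuous $(m-1)$-st derivative, so that $(-d/du)^m\I_-^m g=g$ almost everywhere by the fundamental theorem of calculus; transplanting gives $(-D)^m\vp=f$, which is (\ref{90bedr}). For non-integer $\a=m+\a_0$ the formal inverse behind (\ref{frr+z}) comes from the semigroup law $\I_-^{\,m+1-\a}\I_-^\a=\I_-^{\,m+1}$ (valid wherever the iterated integrals converge absolutely), which collapses to $(-d/du)^{m+1}\I_-^{1-\a_0}\tilde\vp=g$, i.e. $(-D)^{m+1}I^{1-\a_0}_{-,2}\vp=f$.

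The genuine difficulty is that $\tilde\vp=\I_-^\a g$ need not decay fast enough for $\I_-^{1-\a_0}\tilde\vp$ to converge under (\ref{for10z}) alone, which is exactly why the weighted form (\ref{frr+z33}) is the rigorous statement: multiplying $\vp$ by $t^{-2m-2}$ (that is, $\tilde\vp$ by $u^{-m-1}$) forces absolute convergence of the inner integral $\psi=I^{1-\a+m}_{-,2}(t^{-2m-2}\vp)$, while the surrounding powers of $t$ restore the correct operator. To verify that the right-hand side of (\ref{frr+z33}) reproduces $f$, I would pass to the Mellin transform $(\M h)(z)=\intl_0^\infty h(u)\,u^{z-1}\,du$ and use the three multiplier rules $(\M\I_-^\b h)(z)=\frac{\Gam(z)}{\Gam(z+\b)}(\M h)(z+\b)$, $(\M(u^\mu h))(z)=(\M h)(z+\mu)$, and $(\M(-d/du)^j h)(z)=\frac{\Gam(z)}{\Gam(z-j)}(\M h)(z-j)$, together with $(\M\tilde\vp)(z)=\frac{\Gam(z)}{\Gam(z+\a)}(\M g)(z+\a)$. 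Composing these along (\ref{frr+z33}), one sees that the three weight exponents must be arranged so that (a) their sum (the net dilation exponent) vanishes, returning the argument of $\M g$ to $z$, and (b) the telescoping product of Gamma-factor multipliers equals the constant $1$; then $\M(\Cal D^\a_{-,2}\vp)=\M g$ and hence $\Cal D^\a_{-,2}\vp=f$ almost everywhere. The special case (\ref{frr+z3}) is merely $\a=k/2$, $\a_0=\tfrac12$ inserted here, leaving $I^{1/2}_{-,2}$ as the only fractional integral. The main obstacle is not this algebra but its justification under the bare hypothesis (\ref{for10z}): one must show that the weighted inner integral converges absolutely for almost all $t$, that the outer weights and the $m+1$ differentiations act on genuinely (locally) absolutely continuous functions so that the Mellin multiplier identities are legitimate pointwise almost everywhere, and that the computation, performed first on homogeneous inputs, extends to all admissible $f$ by a density argument in the appropriate weighted class.
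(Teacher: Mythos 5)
Your reduction via $u=t^2$ to the right-sided Riemann--Liouville calculus is sound, your transplantation rules ($D\mapsto d/du$, $I^{\b}_{-,2}\mapsto\I_-^{\b}$, (\ref{for10z})$\,\mapsto\intl^\infty|g(s)|s^{\a-1}ds<\infty$) are all correct, and part (i) is fine. Note first, though, that the paper offers no proof of this statement at all: it is imported verbatim from \cite[Theorem 2.44]{Ru3}, where the argument runs through a direct kernel-level Fubini computation (evaluating the inner Beta integral to show that $\psi$ is an absolutely convergent weighted integral of $f$ under (\ref{for10z}), then applying the integer-order differentiation lemma), not through Mellin symbols.

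The genuine gap is that you never execute the one computation your proof rests on --- and had you executed it, it would have \emph{failed} for the formula as printed. In the $u$-variable, (\ref{frr+z33}) reads $u^{\mu_3}(-d/du)^{m+1}u^{\mu_2}\,\I_-^{1-\a_0}\,u^{\mu_1}$ with $\mu_1=-m-1$, $\mu_2=\a$, $\mu_3=1-\a$. Your own condition (a) demands $\mu_1+\mu_2+\mu_3=0$, but here $\mu_1+\mu_2+\mu_3=-m$, which vanishes only when $m=0$. A power-function check confirms this: for $g(s)=s^{-\lam}$, $\lam>\a$, one has $\I_-^{\a}g=\frac{\Gam(\lam-\a)}{\Gam(\lam)}u^{\a-\lam}$, and chasing the printed formula through gives
\begin{equation*}
u^{1-\a}(-d/du)^{m+1}\Bigl(u^{\a}\,\I_-^{1-\a_0}\bigl(s^{-m-1}\tilde\vp\bigr)\Bigr)=u^{-m}g(u),
\end{equation*}
i.e.\ $t^{-2m}f(t)$ rather than $f(t)$. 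The Gamma factors do telescope to $1$ precisely when $\mu_3=1-\a_0=1-\a+m$, i.e.\ when the outer factor is $t^{2(1-\a+m)}$ --- which is exactly what the special case (\ref{frr+z3}) uses: there $\a=k/2$, $m=(k-1)/2$, and $t^{2(1-\a+m)}=t$, whereas $t^{2(1-\a)}=t^{2-k}$ would disagree for every odd $k>1$. So (\ref{frr+z33}) as printed carries a typo (inherited into this paper), and your sketch, by \emph{asserting} the telescoping instead of checking it, would certify a false identity. A second, smaller gap: the closing density argument is not available as stated. The homogeneous inputs $u^{-\lam}$ do not satisfy (\ref{for10z}), and Mellin multiplier identities yield equality of analytic functions on a strip, not the almost-everywhere pointwise equality claimed for the non-smooth functions at hand; delivering the a.e.\ conclusion under the bare hypothesis (\ref{for10z}) is precisely what the kernel-level argument is designed for. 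To repair your proof, compute $\I_-^{1-\a_0}\bigl(s^{-m-1}(\I_-^{\a}g)(s)\bigr)(u)$ directly by Fubini (absolute convergence follows from (\ref{for10z})), evaluate the inner Beta integral explicitly, and then apply your part-(i) differentiation argument to the resulting $(m+1)$-fold integral representation; this simultaneously forces the correct exponent $t^{2(1-\a+m)}$ and removes the density issue.
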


These fractional integrals and their derivatives possess the semi-group property
\be
\bbd_n^\a \bbd_n^\b=\bbd_n^{\a+\b},\qquad I_n^\a I_n^\b=I^{\a+\b}_n,\,\nonumber
\ee
\bea\label{kauky}
\Cal D^{\a}_{\pm, 2}\Cal D^{\b}_{\pm, 2} = \Cal D^{\a+\b}_{\pm, 2}, \qquad I^{\a}_{\pm, 2}I^{\b}_{\pm, 2} = I^{\a+\b}_{\pm, 2},\eea
in suitable classes of functions that guarantee the existence of the corresponding expressions.

\subsection{Radon transforms on Grassmanians}
 Let $\agnj$ and $\gnj$ be the affine Grassmann manifold of all non-oriented $j$-planes $\t$  and the ordinary Grassmann manifold of $j$-dimensional subspaces $\xi$ of $\rn$, respectively.
Similarly, $\agnk$ and $\gnk$ denote the set of  $k$-planes $\z$ and the set of $k$-subspaces $\eta$. Then every $j$-plane $\t$ can be parameterized by the pair $(\xi,u)$, where $\xi\in \gnj$ and $u\in \xi^\perp$. Similarly, we write
$\z=(\eta,v)\in \agnk$ where $\eta\in \gnk$ and $v\in \eta^\perp$. For any integer $m$ and subspace $X$ in $\rn$, denote by $G_m(X)$ and $G(m,X)$ the sets of $m$-dimensional  subspaces and $m$-dimensional planes  in $X$, respectively. If $P$ and $Q$ are two orthogonal subspaces with no other intersection except for the original point, let $[P,Q]$  denote the
smallest subspace that contains both $P$ and $Q$.
     For incident planes  $\z=(\eta,v)\in \agnk$ and   $\t=(\xi,u)\in \agnj$, denote  $P=\xi\cap \eta$, $Q=P^\perp\cap \xi$ and $R=P^\perp\cap \eta$. Then $P$ is a $p$-dimensional subspace, $Q$ a $q$-dimensional subspace and $R$ a $l$-dimensional subspace, where $p+q=j$ and $p+l=k$. Obviously,  $\xi=[P,Q]$, $\eta=[P,R]$.
Then for $\z=(\eta,v)\in \agnk$ and $\t=(\xi,u)\in \agnj$, we have
\bea\label{inc}
\hat \z=\{[P,Q]+v+u: P\in G_p(\eta), Q\in G_q(\eta^\perp),\text{and}\,\, u\in P^\perp\cap \eta\,\},
\eea
\bea\label{inc2}
\hat \t=\{[P,R]+u+v: P\in G_p(\xi), R\in G_l(\xi^\perp),\text{and}\,\, v\in P^\perp\cap \xi\,\},
\eea
where   $p\ge 0, q\ge 0, l\ge 0$ satisfying $p+q=j$, $p+l=k$, and  $G_q(\eta^\perp)$ represents the $q$-dimensional subspace in $\eta^\perp$, $G_l(\xi^\perp)$ the $l$-dimensional subspace in $\xi^\perp$.  The manifolds $\agnj$ and $\agnk$ will be endowed with $d\t=d\xi du$ and $d\z=d\eta dv$, where $d\xi$ and $d\eta$ are the corresponding  $O(n)$-invariant measure on $\gnj$ and $\gnk$ with total mass $1$ and $du$, $dv$  the usual Lebesgue measure on subspace $\xi^\perp$ and $\eta^\perp$, respectively.
Then the orthogonal Radon transform $\rpq$ in (\ref{defor}) can be rewritten   in the following form,
\bea\label{rpq}
\rpq f(\z)=\intl_{G_q({\eta^\perp})}d_{\eta^\perp} Q\intl_{G_{p}(\eta)} d_\eta P\intl_{P^\perp\cap \eta} f([P,Q]+u+v)du\,.
\eea
where $d_{\eta^\perp} Q$ and $d_\eta P$ denote the corresponding  invariant measure on $G_q({\eta^\perp})$ and $G_{p}(\eta)$ with total mass $1$.
Similarly, we can define the corresponding dual transform. For a function $g=g(\z)$ on $\agnk$,  the dual transform of $\rpl g$ is a function on $\agnj$, defined by
\bea\label{rpl}
\rpl g(\xi,u)=\intl_{G_l({\xi^\perp})}d_{\xi^\perp} L\intl_{G_p(\xi)} d_{\xi} P\intl_{P^\perp\cap \xi} g([P,L]+u+v)dv\,.
\eea
where $d_{\xi^\perp} L$ and $d_{\xi} P$ denote the corresponding  invariant measure on $G_l({\xi^\perp})$ and $G_{p}(\xi)$ with total mass $1$, respectively.

 When $p=0$, in the meantime  $q=j$ and $l=k$,  transforms (\ref{rpq}) and (\ref{rpl}) reduce to the Gonzalez  type orthogonal Radon transform,
 \bea
 (R_j^kf)(\eta,v)=\intl_{G_j({\eta^\perp})}d_{\eta^\perp} \xi\intl_{\eta} f(\xi+u+v)du\,,
 \eea
 and its dual transform
 \bea
 (R_k^j f)(\xi,u)=\intl_{G_k({\xi^\perp})}d_{\xi^\perp} \eta\intl_{\xi} f(\eta+u+v)dv\,,
 \eea
 which studied in  \cite{G1,G2}; If moreover $j=0$,  we get the $k$-plane  transform and its dual:
 \be\label{rtra1kfty} (R_k f)(\eta,v)= \intl_{\eta} f(u+v)\, d u, \qquad  \z=(\eta,v) \in G(n,k), \quad 1\le k\le n-1;\ee
 \be  \label{mmsdcrt} (R_k^* \vp)(x)=\intl_{O(n)}\vp(\rho\eta_0 +x) \,d\rho, \ee
where $\eta_0$ is an arbitrary fixed $k$-plane through the origin and $d\rho$ the invariant measure with mass 1. The  dual
$k$-plane transform $R_k^*$  averages a function $\vp$ on $G(n,k)$  over all $k$-planes  passing through a fixed point $x\in \rn$. When $\vp=\vp(\xi,u)$ is a good radial function on  $\agnk$,  $\vp(\eta,v)=\vp_0(|v|)$, then
\bea
R_k^*\vp (x)=\frac{\sig_{k-1}\sig_{n-k-1}}{\sig_{n-1}r^{n-2}}\intl_0^r\vp_0(s)(r^2-s^2)^{k/2-1}s^{n-k-1}\,ds,\quad r=|x|.
\eea

 If $p=j$, in the meantime $q=0$, our transform reduces to the ``inclusion" Radon transform
  \bea\label{rkr}
 (R_{j,k}f)(\eta,v)=\intl_{G_j({\eta})}d_{\eta} \xi\intl_{\xi^\perp\cap \eta} f(\xi+u+v)du\,,
 \eea
 and its dual
 \bea
 (R^*_{j,k}f)(\xi,u)=\intl_{\xi\subset\eta}f(\eta+u)d_{\xi} \eta\,,
 \eea
 that integrates function $f$ on $\agnj$ over all the $k$-planes containing $j$-plane $(\xi,u)$. When $f$ is a radial function $f(\xi,u)=f_0(|u|)$, then  $R_{j,k}f(\eta,v)$ is also radial. Explicitly,
  \bea\label{rar}
  R_{j,k}f(\eta,v)=\sig_{k-j-1}\intl_s^\infty f_0(r)(r^2-s^2)^{(k-j)/2-1}r\,dr, \quad s=|v|,
  \eea
  see \cite{Ru2} for detailed informations.

\subsection{Intertwining relations and Fuglede equality}In this part, we recall  the known relations between fractional integrals and the Radon transforms on Grassmannians.
The following  theorem is due to Rubin \cite{Ru1}, that Radon-John $j$-plane transforms and their duals interwine Riesz Potentials on the source space and target space.
\begin{theorem}\cite[Section 3]{Ru1}\label{fug2}For  functions $f$ on $\rn$, $\psi$ on $\agnk$,
\be\label{ktyu}
R_k I_n^\a f\!=\!I_{n-k}^\a R_k f\,,\quad R_k^* I_{n-k}^\a \psi\!=\!I_{n}^\a R_k^* \psi\,.\ee
Here $I_{n-k}^\a$ stands  for the Riesz potential  on the $(n-k)$-dimensional fiber of the Grassmannian bundle $\agnk$. As above, it is assumed  that either side of the corresponding equality exists in the Lebesgue sense.
\end{theorem}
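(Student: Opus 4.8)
The plan is to establish the first identity $R_k I_n^\a f = I_{n-k}^\a R_k f$ by a direct computation in coordinates adapted to the $k$-plane, and then to deduce the dual identity $R_k^* I_{n-k}^\a \psi = I_n^\a R_k^* \psi$ from the first one by a duality argument that exploits the self-adjointness of the Riesz potentials.

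For the first identity, fix a subspace $\eta\in\gnk$ and decompose $\rn=\eta\oplus\eta^\perp$, writing $y=y'+y''$ with $y'\in\eta$ and $y''\in\eta^\perp$. Since $u,y'\in\eta$ and $v,y''\in\eta^\perp$ are mutually orthogonal, the distance splits as $|u+v-y|^2=|u-y'|^2+|v-y''|^2$. Inserting the integral definition (\ref{rpot}) of $I_n^\a$ into $(R_k I_n^\a f)(\eta,v)=\intl_\eta (I_n^\a f)(u+v)\,du$ and interchanging the order of integration (justified below), the inner integral over $u\in\eta\cong\rk$ becomes, after the substitution $w=u-y'$, a radial integral
\[
\intl_{\rk}\frac{dw}{(|w|^2+s^2)^{(n-\a)/2}},\qquad s=|v-y''|,
\]
which is a Beta-type integral evaluating to a constant times $s^{-(n-k-\a)}$; it converges precisely when $\a<n-k$. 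The remaining integration of $f(y'+y'')$ over $y'\in\eta$ produces exactly $(R_k f)(\eta,y'')$, so one is left with a Riesz potential of $R_k f$ on the fiber $\eta^\perp\cong\rnk$. The one thing to verify is that the accumulated constant equals $1/\gamma_{n-k}(\a)$; this reduces to
\[
\frac{\pi^{k/2}\,\Gam((n-k-\a)/2)}{\gamma_n(\a)\,\Gam((n-\a)/2)}=\frac{1}{\gamma_{n-k}(\a)},
\]
which follows at once from the explicit form of $\gamma_n(\a)$ after cancelling the factors $2^\a$, $\Gam(\a/2)$ and $\Gam((n-\a)/2)$.

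For the dual identity, I would use that $R_k^*$ is the formal adjoint of $R_k$ under the mass-one invariant measures of the paper, so that $\intl_{\agnk}(R_k f)\,\psi\,d\z=\irn f\,(R_k^*\psi)\,dx$ holds exactly (the change of variables $(u,v)\mapsto u+v$ on each fiber and the identification of $\rho\eta_0+x$ with $(\eta, P_{\eta^\perp}x)$ leave no stray constant). Combining this with the self-adjointness of $I_n^\a$ on $\rn$ and the fiberwise self-adjointness of $I_{n-k}^\a$ on each $\eta^\perp$, I pair $R_k^* I_{n-k}^\a\psi$ against an arbitrary test function $f$, move $I_{n-k}^\a$ onto $R_k f$, invoke the first identity to replace $I_{n-k}^\a R_k f$ by $R_k I_n^\a f$, pass back to $\rn$ by duality, and finally move $I_n^\a$ off $f$. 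This yields $\irn f\,(R_k^* I_{n-k}^\a\psi)\,dx=\irn f\,(I_n^\a R_k^*\psi)\,dx$ for all $f$, whence the two functions coincide.

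The main obstacle is analytic bookkeeping rather than algebra. First, one must justify the interchange of integrations under the stated hypothesis that merely one side exists in the Lebesgue sense; the cleanest route is to verify absolute convergence by applying the computation to $|f|$ and noting that finiteness of either side forces the Fubini hypothesis. Second, one must track the admissible range of $\a$: the direct computation is valid for $0<\a<n-k$, and the full range $\a\neq n-k,\,n-k+2,\dots$ is reached by analytic continuation in $\a$, the same parameter range for which $\gamma_{n-k}(\a)$ is defined. A minor secondary point is confirming that the duality pairing carries no normalization factor, which is exactly why the deduction of the second identity produces no extra constant.
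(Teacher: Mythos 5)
The paper does not prove this statement --- it is quoted from \cite[Section 3]{Ru1} --- so there is no internal proof to compare against; but your argument is correct and is exactly the method the paper itself uses for the generalization in Theorem \ref{dwalg2}: split the Riesz kernel along $\eta\oplus\eta^\perp$, evaluate the resulting one-dimensional Beta integral over the $k$-dimensional fiber (valid for $0<\a<n-k$), and match constants, your identity $\pi^{k/2}\Gam((n-k-\a)/2)/\bigl(\gamma_n(\a)\Gam((n-\a)/2)\bigr)=1/\gamma_{n-k}(\a)$ being the correct reduction. Your duality derivation of the second identity is also sound under the paper's normalizations (probability measure on $\gnk$, Lebesgue measure on the fibers), and your Tonelli argument with $|f|$ adequately handles the ``either side exists'' hypothesis.
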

A similar statement for the orthogonal Radon transform  $R_j^k$ of the Gonzalez type  was proved by the author and Rubin \cite{RW2};
\begin{theorem}\cite[section4]{RW2} \label {dwalg} If $\;0<\a<n-k-j$, then
\be\label{wleiy}
(I_{n-k}^{\a} R_j^k f)(\z)= (R_j^k I_{n-j}^{\a} f)(\z), \qquad \z\in \agnk,\ee
provided that either side of this equality exists in the Lebesgue sense.
\end{theorem}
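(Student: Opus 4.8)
The plan is to unfold both sides of \eqref{wleiy} into iterated integrals over the Grassmannian $G_j(\eta^\perp)$ and the Euclidean fibers, to use Fubini to bring the $G_j(\eta^\perp)$-integration outermost, and then to show that the two inner Euclidean computations collapse onto one and the same object. Throughout I fix the $k$-subspace $\eta$ and work on the fiber $\eta^\perp\cong\bbr^{n-k}$. For each $\xi\in G_j(\eta^\perp)$ set $W_\xi=\xi^\perp\cap\eta^\perp$, of dimension $n-k-j$, so that one has the two orthogonal splittings $\eta^\perp=\xi\oplus W_\xi$ and $\xi^\perp=\eta\oplus W_\xi$ (using $\xi\subset\eta^\perp$, hence $\eta\subset\xi^\perp$). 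Writing $v=v_\xi+v_W$ accordingly, the $j$-plane $\xi+u+v$ in the definition of $R_j^k$ has direction $\xi$ and offset $u+v_W\in\xi^\perp$, since $u\in\eta\subset\xi^\perp$ and $v_\xi\in\xi$ is projected away. I abbreviate the partial integral $F_\xi(b)=\intl_\eta f(\xi,a+b)\,da$ for $b\in W_\xi$, where $f(\xi,\cdot)$ denotes $f$ as a function of the offset variable in $\xi^\perp$.

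The computational core is a single slicing identity for Riesz potentials, which is exactly the calculation underlying \eqref{ktyu} in Theorem~\ref{fug2}: if a function on $\bbr^m=A\oplus B$ depends only on its $B$-component, then its $m$-dimensional Riesz potential of order $\alpha$ equals a constant multiple of the $(\dim B)$-dimensional potential of that function. The constant is read off from the elementary integral $\intl_{\bbr^a}(|y|^2+c^2)^{-s/2}\,dy=\pi^{a/2}\,\Gamma((s-a)/2)\,\Gamma(s/2)^{-1}\,c^{a-s}$, which converges precisely when $s>a$. With $s=n-j-\alpha,\ a=k$ on one side and $s=n-k-\alpha,\ a=j$ on the other, both convergence requirements read $\alpha<n-k-j$, which is exactly the hypothesis; this is the only place where the sharp bound (rather than mere positivity of $\alpha$) is used.

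Applying this to the right-hand side, I write $(I_{n-j}^\alpha f)(\xi+u+v)=(I_{n-j}^\alpha f)(\xi,u+v_W)$ as an integral over $\xi^\perp=\eta\oplus W_\xi$, carry out the outer $\intl_\eta du$ against the factor $(|u-a|^2+|v_W-b|^2)^{-(n-j-\alpha)/2}$, and observe that the outcome is independent of $a\in\eta$, so the remaining $a$-integration reproduces exactly $F_\xi$. What survives, after integrating in $\xi$ over $G_j(\eta^\perp)$, is $C_1\intl_{G_j(\eta^\perp)}(I_{W_\xi}^\alpha F_\xi)(v_W)\,d_{\eta^\perp}\xi$. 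Treating the left-hand side in parallel, I insert the definition of $R_j^k f$, pull the $G_j(\eta^\perp)$-integral outside, and split the fiber integral $dv'=dv'_\xi\,dv'_W$ over $\eta^\perp=\xi\oplus W_\xi$; since the integrand depends on $v'$ only through $v'_W$, the $\intl_\xi dv'_\xi$ integration is again of the elementary type and yields $C_2\intl_{G_j(\eta^\perp)}(I_{W_\xi}^\alpha F_\xi)(v_W)\,d_{\eta^\perp}\xi$. Both sides therefore reduce to the same Grassmannian integral of the \emph{transverse} potential $I_{W_\xi}^\alpha F_\xi$, and a direct check using $\gamma_m(\alpha)=2^\alpha\pi^{m/2}\Gamma(\alpha/2)/\Gamma((m-\alpha)/2)$ gives
\[
C_1=C_2=\frac{\pi^{(j+k-n)/2}\,\Gamma((n-k-j-\alpha)/2)}{2^\alpha\,\Gamma(\alpha/2)}\,\gamma_{n-k-j}(\alpha),
\]
the symmetry $\pi^{(k-n+j)/2}=\pi^{(j-n+k)/2}$ being what makes the two prefactors coincide.

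The main obstacle is not any single integral but the bookkeeping: the transverse subspace $W_\xi$ and the decompositions of $v$ and $v'$ all depend on the running variable $\xi$, so the Fubini interchange and the elementary fiber integrations must be carried out inside the $\xi$-integral, and one must verify that the two structurally different reductions — integrating out $\eta\cong\bbr^k$ on the right versus $\xi\cong\bbr^j$ on the left — genuinely land on the identical object $I_{W_\xi}^\alpha F_\xi$. Justifying the interchanges rests on the standing assumption that one side of \eqref{wleiy} exists in the Lebesgue sense: replacing $f$ by $|f|$ then makes the full iterated integral finite and licenses Fubini, after which the chain of equalities above holds verbatim for $f$ itself.
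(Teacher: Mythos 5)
Your proof is correct and follows essentially the same route as the paper's argument for the more general Theorem \ref{dwalg2} (of which the stated identity is the case $p=0$): unfold both sides, use the orthogonal splittings $\xi^\perp=\eta\oplus(\xi^\perp\cap\eta^\perp)$ and $\eta^\perp=\xi\oplus(\xi^\perp\cap\eta^\perp)$, integrate out the complementary subspace via the elementary beta-type integral, and match the two constants through the $j\leftrightarrow k$ symmetry. Your explicit tracking of where the bound $\a<n-k-j$ enters and of the Fubini justification is a somewhat more careful rendering of the same computation.
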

The case of $j=0$  agrees with the equality (\ref{ktyu}).
When  $j+k=n-1, m=1$, the Laplacian form of  (\ref{wleiy}) was proved by Gonzalez under the assumption that $f$ is infinitely differentiable and compactly supported; cf. \cite[Lemma 3.3] {G2}.

At last, we introduce the Fuglede's formula, which plays an important role in the construction of original function $f$ from its $k$-plane transform $R_kf$.  \begin{theorem}\label{fug}\cite{Fug}\label{howa1fu} For any   $1 \le k \le n-1$,
\be\label{kryafu} R_k^*R_kf=c_{k,n}\,I_n^{k} f, \qquad c_{k,n}=\frac{2^{k}\pi^{k/2} \Gam (n/2)}{\Gam ((n-k)/2)}, \ee
provided that either side of this equality exists in the Lebesgue sense.
\end{theorem}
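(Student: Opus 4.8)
The plan is to recognize that $R_k^*R_k$ is both translation-invariant and $O(n)$-invariant, hence a convolution operator with a radial, positively homogeneous kernel; identifying that kernel in polar coordinates will reveal it to be the Riesz kernel $|x|^{-(n-k)}$, and the constant will drop out by a Gamma-function computation. (Equivalently one could argue on the Fourier side, where $R_k^*R_k$ becomes a radial multiplier, but the direct computation yields the constant at once.)

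First I would unwind the two operators at a point $x\in\rn$. Averaging $R_kf$ over all $k$-planes through $x$ is, after pushing the Haar measure on $O(n)$ forward to the invariant probability measure on $\gnk$ via $\rho\mapsto\rho\eta_0$, an integral over subspaces: the $k$-plane through $x$ with direction $\eta$ is $\{\Pre x+u:u\in\eta\}$, so
\[
R_k^*R_kf(x)=\igr\Big[\intl_\eta f(\Pre x+u)\,du\Big]d\eta .
\]
In the inner integral the component $\Pr_\eta x\in\eta$ may be absorbed into $u$; substituting $w=x-y$ with $y=\Pre x+u$ and noting $w\in\eta$ turns the expression into
\[
R_k^*R_kf(x)=\igr\int_{w\in\eta} f(x-w)\,d_\eta w\,d\eta ,
\]
which is visibly a convolution of $f$ with an $O(n)$-invariant kernel supported on the union of all $k$-subspaces.

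The core step is then a polar-coordinate reduction. Writing $\int_{w\in\eta}g(w)\,d_\eta w=\int_0^\infty r^{k-1}\!\int_{\bbs_\eta^{k-1}}g(r\theta)\,d\theta\,dr$, where $\bbs_\eta^{k-1}$ is the unit sphere of $\eta$, and applying the standard averaging identity
\[
\igr\int_{\bbs_\eta^{k-1}}h(\theta)\,d\theta\,d\eta=\frac{\sig_{k-1}}{\sig_{n-1}}\ins h(\theta)\,d\theta ,
\]
transfers the angular integration to the full sphere $\sn$. Comparing the resulting $\int_0^\infty r^{k-1}\!\int_{\sn}f(x-r\theta)\,d\theta\,dr$ with the polar form of $\irn f(x-w)|w|^{-(n-k)}\,dw$ — which differs only by the Jacobian $r^{n-1}$ against the weight $r^{-(n-k)}$, i.e. again $r^{k-1}$ — shows the two agree. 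Hence
\[
R_k^*R_kf(x)=\frac{\sig_{k-1}}{\sig_{n-1}}\irn\frac{f(y)}{|x-y|^{n-k}}\,dy=\frac{\sig_{k-1}}{\sig_{n-1}}\,\gamma_n(k)\,(I_n^kf)(x),
\]
with $\gamma_n(k)$ as in (\ref{rpot}). A short computation using $\sig_{m-1}=2\pi^{m/2}/\Gam(m/2)$ collapses $\sig_{k-1}\gamma_n(k)/\sig_{n-1}$ to exactly $c_{k,n}=2^k\pi^{k/2}\Gam(n/2)/\Gam((n-k)/2)$.

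The main obstacle is not the algebra but the justification of the interchange of the Grassmannian and Euclidean integrations in the Lebesgue sense, as required by the hypothesis. Because $f$ need not be nonnegative, I would first run the entire computation for $|f|$ in place of $f$: this shows that finiteness of either side of (\ref{kryafu}) forces absolute convergence of the iterated integrals, after which Fubini legitimizes all the interchanges and the signed case follows verbatim. The nonnegative computation is exactly where the polar-coordinate identity and the averaging formula over $\gnk$ carry the argument; everything else is bookkeeping with Gamma functions.
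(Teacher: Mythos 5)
The paper does not prove this statement at all: it is quoted as Fuglede's classical formula with a citation to \cite{Fug}, so there is no internal proof to compare yours against. Your argument is the standard derivation and it checks out: reducing $R_k^*R_kf(x)$ to $\igr\int_{\eta}f(x-w)\,dw\,d\eta$, passing to polar coordinates on each $\eta$, and applying the averaging identity
\[
\igr\intl_{\bbs^{k-1}_{\eta}}h(\theta)\,d\theta\,d\eta=\frac{\sig_{k-1}}{\sig_{n-1}}\ins h(\theta)\,d\theta
\]
identifies the kernel as $(\sig_{k-1}/\sig_{n-1})\,|x-y|^{k-n}$, and with $\gamma_n(k)=2^k\pi^{n/2}\Gam(k/2)/\Gam((n-k)/2)$ and $\sig_{m-1}=2\pi^{m/2}/\Gam(m/2)$ the product $\sig_{k-1}\gamma_n(k)/\sig_{n-1}$ does collapse to $c_{k,n}$. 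Your device of first running the computation for $|f|$ and then invoking Tonelli/Fubini is exactly the right way to honor the ``either side exists in the Lebesgue sense'' clause, so I see no gap.
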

Similar statements have been proved for ``Gonzalez" transform and ``inclusion" transform.

\be\label{heit1} R_k^* \,R_j^k R_j h \!=\! R_j^* \,R_k^j R_k h \!=\! c \,I_n^{j+k} h, \qquad c\!=\!\frac{2^{j+k} \pi^{(j+k)/2}\, \Gam (n/2)}{\Gam ((n-j-k)/2)};\ee
\be\label{roiw} R_k^*  R_{j,k}  R_j h=R_k^* R_k h=c_{k,n}\,I_n^{k} h, \qquad c_{k,n}=\frac{2^{k}\pi^{k/2} \Gam (n/2)}{\Gam ((n-k)/2)}, \ee
both under the conditions that the Riesz potentials on the right-hand side exist in the Lebesgue sense.
In this paper, we will  generalize them  to Strichartz type transforms, see Theorem \ref{if}.

In the following statement, we assume  the integer numbers $p,q,l$ satisfying
\be p>0,\quad q>0,\quad l>0 \quad \text{and}\quad p+q+l<n\,.\ee

\section{Radon transforms of Radial functions}

We recall that a function $f$ on $\agnj$  is  radial, if there is a function $f_0$ on $\bbr_+$, such that  $f(\t)=f_0(|\t|)$.
 If radial function $f$ is good enough, then, by  (\ref{rpq}), we can write
 \bea\label{rc}
 (\rpq f)(\eta,v)=\intl_{G_q({\eta^\perp})}F(Q+v)d_{\eta^\perp} Q\,,
\eea
 where
 \bea\label{rc1}  F(Q+v)=\intl_{G_{p}(\eta)} d_\eta P\intl_{P^\perp\cap \eta} f_0(|u+\Pr{}_{Q^{\perp}}v|)du\,,
 \eea
  and $\Pr{}_{Q^{\perp}}v$ denotes the orthogonal projection of $v$ onto $Q^\perp$. For fixed $Q$,  the expression of $F(Q+v)$  can be seen as a  $p$-plane to $k$-plane inclusion  transform  restricted in subspace $Q^\perp$. Explicitly,
   \bea
   F(Q+v)=R_{p,k}\tilde{f} (\eta,\Pr_{Q^\perp}v),
   \eea
   where $R_{p,k}$ denotes the $p$-plane to $k$-plane inclusion transform in $Q^\perp$ and function $\tilde{f}$ is a radial function on $G(p,Q^\perp)$ satisfying $\tilde{f}(P,\om)=f_0(|\om|)$.
   Using \cite[Lemma 2.3]{Ru2},
  $F(Q+v)$ is also a radial function on $G(q,\eta^\perp)$, we write  $F(Q+v)=F_0(|\Pr{}_{Q^{\perp}}v|)$. According to the formula (\ref{rar}),
  \bea\label{F0}
  F_0(s)=\sig_{l-1}\intl_s^\infty f_0(r)(r^2-s^2)^{l/2-1}r\,dr\,.
  \eea
From (\ref{rc}),  for fixed $\eta$, the function  $\rpq f(\eta,\cdot)$ is a dual $q$-plane transform  restricted in subspace $\eta^\perp$. Then from (\ref{rkr}), $\rpq f(\eta,\cdot)$ is also a radial function with the expression
  \bea\label{R0}
  \rpq f(\eta,v)=\frac{\sig_{q-1}\sig_{n-k-q-1}}{\sig_{n-k-1}t^{n-k-2}}\intl_0^t F_0(s)(t^2-s^2)^{q/2-1}s^{n-k-q-1}\,ds\,, \quad  t=|v|.
  \eea
Combining (\ref{F0}) and (\ref{R0}), using (\ref{as34b12}) and (\ref{eci}), we have the following theorem.
\begin{theorem}\label{pr}
 If $f(\t)\equiv f_0(|\t|)$ satisfies the conditions
\be\label{pf4e}
\intl_0^a |f_0 (t)|\, t^{n-j-1}dt <\infty \quad \text{and} \quad \intl_a^\infty |f_0 (t)|\, t^{l-1}dt <\infty\ee
for some $a>0$, then
\be\label{pffrr} (\rpq f)(\z)=(I_{j,k} f_0)(|\z|),\ee where
\bea
(I_{j,k} f_0)(s)\!\!&=&\!\!\frac{c_1}{s^{n-k-2}}\intl_0^s(s^2\!-\!r^2)^{q/2-1} r^{n-k-q-1}dr\!\intl_r^{\infty}\!f_0(t)(t^2\!-\!r^2)^{l/2-1} t dt\nonumber\\
\label{pffrrt} &=&\frac{\tilde c_1}{s^{n-k-2}} (I^{q/2}_{+, 2} \, r^{\ell-2} I^{l/2}_{-, 2} f_0)(s), \quad \ell=n-k-q\ge 1,\quad\eea
\be\label{pjjj4e} c_1=\frac{\sig_{l-1}\sig_{q-1}\sig_{\ell-1}}{\sig_{n-k-1}}, \qquad \tilde c_1=\frac{\pi^{l/2}\, \Gam ((n-k)/2)}{ \Gam (\ell/2)}.\ee
Moreover,
\be\label{puor} \intl_\a^\b |(I_{j,k} f_0)(s)|\,ds <\infty\quad \text{for all}\quad 0<\a<\b<\infty.\ee
\end{theorem}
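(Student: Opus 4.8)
The plan is to read the formula off the two one--dimensional representations already established, (\ref{F0}) and (\ref{R0}), and then to control convergence by the Erd\'elyi--Kober existence results in Lemma \ref{lifa2}. First I would substitute (\ref{F0}) into (\ref{R0}), using $s=|\z|$ for the outer radial variable, $r$ for the intermediate variable and $t$ for the innermost one. Since (\ref{R0}) is a single integral of $F_0$ and $F_0$ is itself the single integral (\ref{F0}), this is a purely formal substitution requiring no interchange of order, and it yields the iterated integral in the first line of (\ref{pffrrt}); the constant $c_1$ in (\ref{pjjj4e}) is simply the product of the three sphere--measure factors carried by (\ref{F0}) and (\ref{R0}).

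For the second equality in (\ref{pffrrt}) I would recognise the two integrals. By (\ref{eci}) the inner integral $\intl_r^{\infty}f_0(t)(t^2-r^2)^{l/2-1}t\,dt$ equals $\tfrac12\Gam(l/2)\,(I^{l/2}_{-, 2}f_0)(r)$, and by (\ref{as34b12}), after inserting the weight $r^{\ell-2}$, the outer integral becomes $\tfrac12\Gam(q/2)\,(I^{q/2}_{+, 2}\,r^{\ell-2}I^{l/2}_{-, 2}f_0)(s)$. Collecting the constants with $\sig_{m-1}=2\pi^{m/2}/\Gam(m/2)$ and $q+\ell=n-k$ (so the $\pi$--powers combine to $\pi^{l/2}$) reproduces $\tilde c_1=\pi^{l/2}\Gam((n-k)/2)/\Gam(\ell/2)$; this is routine gamma bookkeeping and I would present only the collection of constants.

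The substance of the argument is existence under (\ref{pf4e}). To apply Lemma \ref{lifa2}(i) to the outer operator $I^{q/2}_{+, 2}$ acting on $g(r)=r^{\ell-2}(I^{l/2}_{-, 2}f_0)(r)$, the one thing to verify is that $r\mapsto r\,g(r)=r^{\ell-1}(I^{l/2}_{-, 2}f_0)(r)$ is locally integrable on $\bbr_+$; this simultaneously forces $(I^{l/2}_{-, 2}f_0)(r)$ to be finite for almost every $r$, matching Lemma \ref{lifa2}(ii) applied to the second condition of (\ref{pf4e}) (which is exactly (\ref{for10z}) with $\a=l/2$). By Tonelli,
\[
\intl_0^b r^{\ell-1}\,|(I^{l/2}_{-, 2}f_0)(r)|\,dr\le \frac{2}{\Gam(l/2)}\intl_0^\infty |f_0(t)|\,t\left(\,\intl_0^{\min(b,t)}r^{\ell-1}(t^2-r^2)^{l/2-1}\,dr\right)dt.
\]
For $t\le b$ the substitution $r^2=t^2u$ evaluates the inner integral to $\tfrac12 B(\ell/2,l/2)\,t^{n-j-2}$ (using $\ell+l=n-j$), so this range contributes a multiple of $\intl_0^b|f_0(t)|\,t^{n-j-1}\,dt$; for $t>b$ the same substitution gives $\tfrac12 t^{n-j-2}\intl_0^{b^2/t^2}u^{\ell/2-1}(1-u)^{l/2-1}\,du$, and since the incomplete Beta integral is $\le C_0(b^2/t^2)^{\ell/2}$ uniformly, this is $\le C\,t^{l-2}$ and contributes a multiple of $\intl_b^\infty|f_0(t)|\,t^{l-1}\,dt$. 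Both integrals are finite by the two conditions in (\ref{pf4e}), so Lemma \ref{lifa2}(i) yields absolute convergence of the outer integral almost everywhere, giving (\ref{pffrr}). I expect this Tonelli/Beta computation to be the main obstacle, since the whole point is that the exponents $n-j-1$ and $l-1$ in (\ref{pf4e}) are precisely the powers the inner integral produces near $t=0$ and near $t=\infty$, and getting these asymptotics right is the delicate step.

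Finally, for the local integrability (\ref{puor}), on $[\a,\b]\subset(0,\infty)$ the factor $s^{-(n-k-2)}$ is bounded, so it suffices to bound $\intl_\a^\b|(I^{q/2}_{+, 2}g)(s)|\,ds$ with the same $g$. A second application of Tonelli gives
\[
\intl_\a^\b|(I^{q/2}_{+, 2}g)(s)|\,ds\le \frac{2C}{\Gam(q/2)}\intl_0^\b r^{\ell-1}|(I^{l/2}_{-, 2}f_0)(r)|\,dr,\qquad C=\sup_{0\le r\le \b}\,\intl_{\max(\a,r)}^\b(s^2-r^2)^{q/2-1}\,ds,
\]
where $C<\infty$ because $q>0$ makes the singularity at $s=r$ integrable. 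The right--hand integral was already shown finite in the previous step, so (\ref{puor}) follows and the proof is complete.
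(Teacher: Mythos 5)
Your proposal is correct and follows essentially the same route as the paper: both reduce everything to a Tonelli interchange in the iterated integral and to kernel estimates that land exactly on the two exponents $n-j-1$ and $l-1$ of (\ref{pf4e}). The only difference is organizational (you first establish $r^{\ell-1}I^{l/2}_{-,2}|f_0|\in L^1_{loc}$ and invoke Lemma \ref{lifa2}(i), with exact Beta-function evaluations, whereas the paper bounds $\int_\a^\b|(I_{j,k}f_0)(s)|\,ds$ directly using cruder pointwise kernel bounds), and your constant bookkeeping for $\tilde c_1$ checks out.
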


\begin{proof} We just need   to show that the assumptions in (\ref{pf4e}) imply (\ref {puor}).   Then  $(I_{j,k} f_0)(s)<\infty $ for almost all $s>0$,  and therefore  (\ref{pffrr}) is meaningful.  The proof of (\ref{pf4e}) is very similar to the proof in \cite[Lemma 3,1]{RW1}. To keep the paper complete and readable, we give its proof here.
 It suffices to assume $f_0 \ge 0$.
Let $\ell =n-q-k\ge 1$ and the letter $c$ stands  for a constant that can be different  at each occurrence.
Then through (\ref{R0}), for any $0<\a<\b<\infty$,
\bea\label{ap1}
&&\intl_\a^\b (I_{j,k} f_0)(s)\,ds\le c \intl_\a^\b ds\intl_0^s(s^2\!-\!r^2)^{q/2-1} r^{\ell-1}
F_0(r)\,dr\nonumber\\
&&\le c\intl_\a^\b ds\intl_0^s(s\!-\!r)^{q/2-1} r^{\ell-1}F_0(r)\,dr\nonumber\\
&&= c\intl_0^\a  r^{\ell-1} F_0(r)\, dr\intl_\a^\b (s-\!r)^{q/2-1}ds+c\intl_\a^\b  r^{\ell-1}F_0(r)  dr\intl_r^\b (s-\!r)^{q/2-1}ds\nonumber\\
&&\le c\intl_0^\b  r^{\ell-1}  (\b-\!r)^{q/2}F_0(r)\,dr-c\intl_0^\a  r^{\ell-1}  (\a-\!r)^{q/2}F_0(r)\,dr.\nonumber
\eea
These  integrals have the same form, so we just need to show that
\bea
I(\a)\equiv \intl_0^\a  r^{\ell-1}  (\a-\!r)^{q/2}F_0(r)\,dr<\infty\,\qquad \forall \;\a>0.\nonumber
\eea
Indeed,
\bea
&&I(\a)\le \intl_0^\a  r^{\ell-1}  (\a-\!r)^{q/2}dr\intl_r^{\infty}\!f_0(t)(t-\!r)^{l/2-1} t^{l/2} dt\nonumber\\
&&\le\! c\intl_0^\a \!f_0(t)\,t^{l/2+\ell-1}dt\! \intl_0^t(t\!-\!r)^{l/2-1} dr\!+\!c\!\intl_\a^\infty \! f_0(t)\,t^{l/2} dt\!\intl_0^\a (t\!-\!r)^{l/2-1} dr\nonumber\\
&&=c \intl_0^\a f_0(t)t^{l+\ell-1}dt+c\intl_\a^\infty  f_0(t)\,t^{l/2} (t^{l/2}-(t-\a)^{l/2}) \,dt\,,\nonumber
\eea
So
\bea
I(\a)\le c\intl_0^\a f_0(t)t^{n-j-1}dt+c\intl_\a^\infty  f_0(t)\,t^{l-1} dt<\infty\,,\nonumber
\eea
The last expression is finite by  (\ref{pf4e}).
\end{proof}
\begin{remark}\label{nese}${}$
\vskip 0.2 truecm
\noindent 1. If the first integral in  (\ref{pf4e}) is not finite,  function $f(\t)=\frac{1}{|\t|^{n-j}}$  only satisfing the second integral condition in (\ref{pf4e}) gives an example that  its Strichartz transform $\rpq f(\eta,v)=\infty$ for almost every point $(\eta,v)\in \agnk$.
\vskip 0.2 truecm
\noindent 2. Through Lemma \ref{lifa2},  to guarantee the  existence of the corresponding integrals $(I_{j,k} f_0)(s)$, the finiteness of the second integral in  (\ref{pf4e})  is essentially  necessary.
\end{remark}

The following  analogue of Theorem \ref{pr} for the dual transform $\rpl g$ follows from Theorem \ref{pr} by the  symmetry.

\begin{theorem}\label{pr22}
 If $g(\z)\equiv g_0(|\z|)$ satisfies the conditions
\be\label{pf4ez}
\intl_0^a |g_0 (s)|\, s^{n-k-1}ds <\infty \quad \text{and} \quad \intl_a^\infty |g_0 (s)|\, s^{q-1}ds <\infty\,,\ee
for some $a>0$, then
\be\label{pffrr1} (\rpl g(\t)=(I_{k,j}g_0)(|\t|)\,,\ee
 where
\bea
(I_{k,j}g_0)(t)\!\!&=&\!\!\frac{c_2}{t^{n-j-2}}\intl_0^t(t^2\!-\!r^2)^{l/2-1} r^{\ell-1}dr\!\intl_r^{\infty}\!g_0(s)(s^2\!-\!r^2)^{q/2-1} s ds\nonumber\\
\label{pffrr1t} &=&\!\!\frac{\tilde c_2}{t^{n-j-2}} (I^{l/2}_{+, 2} \, r^{\ell-2} I^{q/2}_{-, 2} g_0)(t), \quad \ell=n-j-k\ge 1,\quad \eea
\be\label{pjjj4e} c_2=\frac{\sig_{l-1}\sig_{q-1}\sig_{\ell-1}}{\sig_{n-j-1}}, \qquad \tilde c_2=\frac{\pi^{q/2}\, \Gam ((n-j)/2)}{ \Gam (\ell/2)}.\ee
Moreover,
\be\label{puor1} \intl_\a^\b |(I_{k,j}g_0)(t)|\,dt <\infty\quad \text{for all}\quad 0<\a<\b<\infty.\ee
\end{theorem}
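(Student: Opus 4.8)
The plan is to obtain Theorem~\ref{pr22} from Theorem~\ref{pr} by the duality symmetry already signalled just before the statement. Comparing the definitions (\ref{rpq}) and (\ref{rpl}), the dual transform $\rpl g(\xi,u)$ is literally $\rpq f(\eta,v)$ with the two Grassmannians interchanged: the $k$-subspace $\eta\in\gnk$ is replaced by the $j$-subspace $\xi\in\gnj$, the inner $q$-plane $Q\in G_q(\eta^\perp)$ by the $l$-plane $L\in G_l(\xi^\perp)$, and the fibre variable $v$ by $u$. At the level of the structural integers this is the substitution $j\leftrightarrow k$, $q\leftrightarrow l$ with $p$ held fixed (compatible with $p+q=j$, $p+l=k$). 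First I would record this dictionary precisely, so that each later step becomes a transcription of the corresponding step in the proof of Theorem~\ref{pr}, and I would check that it carries the hypotheses (\ref{pf4e}) into (\ref{pf4ez}), i.e. $t^{n-j-1}\mapsto s^{n-k-1}$ and $t^{l-1}\mapsto s^{q-1}$.

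Next I would reproduce, under this dictionary, the two-stage factorization behind (\ref{rc})–(\ref{R0}). For fixed $L$, the inner integral in (\ref{rpl}) is a $p$-plane to $j$-plane inclusion transform restricted to $L^\perp$; since $j-p=q$, formula (\ref{rar}) gives the radial profile
\[
G_0(r)=\sig_{q-1}\intl_r^\infty g_0(s)\,(s^2-r^2)^{q/2-1}\,s\,ds,
\]
which is exactly the inner integral of (\ref{pffrr1t}). Then, for fixed $\xi$, the outer integration is a dual $l$-plane transform inside $\xi^\perp$, a space of dimension $n-j$; its radial form is the analogue of (\ref{R0}) with $n-k$ replaced by $n-j$ and $q$ replaced by $l$, contributing the kernel $(t^2-r^2)^{l/2-1}$ and the factor $r^{n-j-l-1}$. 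Hence the complementary fibre exponent equals $n-j-l$, which coincides with the $\ell=n-k-q$ of Theorem~\ref{pr} because $j+l=k+q$. Composing the two stages and rewriting the double integral through (\ref{as34b12}) and (\ref{eci}) produces (\ref{pffrr1t}), now with the Erd\'elyi–Kober operators $I^{l/2}_{+,2}$ and $I^{q/2}_{-,2}$ and the constants $c_2,\tilde c_2$ appearing in the places dictated by the swap $q\leftrightarrow l$, $j\leftrightarrow k$.

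Finally, the finiteness claim (\ref{puor1}) follows by transcribing verbatim the estimate in the proof of Theorem~\ref{pr}: the same chain of inequalities bounds $\intl_\a^\b|(I_{k,j}g_0)(t)|\,dt$ by integrals of $I(\a)$-type, and then by $c\intl_0^\a g_0(s)\,s^{n-k-1}\,ds+c\intl_\a^\infty g_0(s)\,s^{q-1}\,ds$, which is finite by (\ref{pf4ez}). I expect the only genuine work to be bookkeeping: verifying that the fibre dimension $n-j$ of $\xi^\perp$ and the sphere-measure constants $\sig_{\cdot}$ track correctly through the substitution, so that the exponents and the constants $c_2,\tilde c_2$ emerge exactly as stated. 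No new analytic difficulty arises beyond what Theorem~\ref{pr} already handles; the proof is essentially a careful transposition.
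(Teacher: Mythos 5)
Your proposal is correct and is exactly the paper's route: the paper disposes of Theorem \ref{pr22} with the single remark that it ``follows from Theorem \ref{pr} by the symmetry,'' which is precisely the dictionary $j\leftrightarrow k$, $q\leftrightarrow l$ (with $p$ fixed) that you spell out and verify. Your bookkeeping in fact catches a small slip in the printed statement: the exponent should be $\ell=n-j-l=n-k-q$ (equal to the $\ell$ of Theorem \ref{pr} since $j+l=k+q$), not $n-j-k$ as written.
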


\begin{remark}
A similar statement as in Remark \ref{nese} explains   the importance of the  two integral conditions  in  (\ref{pf4ez}).
\end{remark}
\begin{example}\label{de}The following formulas can be easily obtained from (\ref{pffrrt}) and (\ref{pffrr1t}),\\
\noindent {\rm (i)} If $f(\t)\!=\! |\t|^{-\lam}$, $\!l <  \lam <\! n-j$, then $(\rpq f)(\z)\!=\!c_1 \,|\z|^{l-\lam}$;\\
\noindent {\rm (ii)} If $g(\z)\!=\! |\z|^{-\lam}$, $\!q <  \lam <\! n-k$, then $(\rpl g)(\t)\!=\!c_2 \,|\t|^{q-\lam}$;\\
\noindent {\rm (iii)} If $f(\t)\!=\! (1+|\t|^2)^{-\frac{n-p}{2}}$, then $(\rpq f)(\z)\!=\!c_3 \,(1+|\z|^2)^{-\frac{n-k-q}{2}}$;\\
\noindent {\rm (iv)} If $g(\z)\!=\! (1+|\z|^2)^{-\frac{n-p}{2}}$, then $(\rpl g)(\t)\!=\!c_4 \,(1+|\t|^2)^{-\frac{n-j-l}{2}}$,\\
where
$$
c_1=\frac{\pi^{l/2}\Gam((\lam-l)/2)\Gam((-\lam+n-j)/2)\Gam((n-k)/2))}{\Gam(\lam/2)\Gam((-\lam+n-q)/2)\Gam((n-k-q)/2)},$$
$$c_2=\frac{\pi^{q/2}\Gam((\lam-q)/2)\Gam((-\lam+n-k)/2)\Gam((n-j)/2))}{\Gam(\lam/2)\Gam((-\lam+n-l)/2)\Gam((n-j-l)/2)},$$
$$c_3=\frac{\pi^{l/2}\Gam((n-k)/2)}{\Gam((n-p)/2)},\quad c_4=\frac{\pi^{q/2}\Gam((n-j)/2)}{\Gam((n-p)/2)}.
$$
\end{example}

\begin{theorem}\label {wiuytr}  Suppose  $p+q+l<n$ and $p+q=j, p+l=k$,  and let
 $f(\t)\equiv f_0 (|\t|)$   be a  radial function on $\agnj$ satisfying
 the same integral conditions  (\ref{pf4e}). Then function $f_0$ can be recovered from the Radon transform $(\rpq f)(\z)\equiv (I_{j,k} f_0)(|\z|)$  by the formula
\be\label{sjue}
f_0(t)=\tilde c_1{\!}^{-1}\,(\Cal D^{l/2}_{-, 2} \, r^{2-\ell} \,\Cal D ^{q/2}_{+, 2} \,s^{n-k-2}\,I_{j,k} f_0)(t),\ee
where  $\tilde c_1=\pi^{l/2}\, \Gam ((n-k)/2)/\Gam (\ell/2)$ and the  Erd\'{e}lyi--Kober fractional derivatives $\Cal D^{l/2}_{-, 2}$ and $ \Cal D ^{q/2}_{+, 2}$ are defined by  (\ref{frr+z})-(\ref{frr+z3}).
\end{theorem}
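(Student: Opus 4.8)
The plan is to invert the composite representation (\ref{pffrrt}) by stripping off the operators one at a time, in the reverse of the order in which they act. Writing $\vp=I_{j,k}f_0$, the identity (\ref{pffrrt}) reads
\[
\tilde c_1^{-1}\, s^{n-k-2}\, \vp(s) = (I^{q/2}_{+, 2}\, r^{\ell-2}\, I^{l/2}_{-, 2} f_0)(s), \qquad \ell = n-k-q\ge 1.
\]
The three building blocks on the right are the inner Erd\'elyi--Kober integral $I^{l/2}_{-, 2}$, multiplication by $r^{\ell-2}$, and the outer integral $I^{q/2}_{+, 2}$. Their left inverses are, respectively, $\Cal D^{l/2}_{-, 2}$, multiplication by $r^{2-\ell}$, and $\Cal D^{q/2}_{+, 2}$, so formally (\ref{sjue}) is nothing but the composition of these three inverses applied to $\tilde c_1^{-1}s^{n-k-2}\vp$. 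The real content is to certify that each inversion is legitimate under the hypotheses (\ref{pf4e}), matching each requirement of the inversion theorems to one of the two integral conditions.

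First I would apply $\Cal D^{q/2}_{+, 2}$. By Theorem \ref{78awqe555}, the identity $\Cal D^{q/2}_{+, 2} I^{q/2}_{+, 2} g = g$ holds for almost all $t$ provided $r\,g(r)$ is locally integrable on $\bbr_+$; here $g = r^{\ell-2} I^{l/2}_{-, 2} f_0$, so I must verify that $r^{\ell-1}(I^{l/2}_{-, 2} f_0)(r)$ is locally integrable. Up to the constant $\sig_{l-1}\Gam(l/2)/2$, the function $I^{l/2}_{-, 2} f_0$ is exactly the radial profile $F_0$ of (\ref{F0}), and the estimate already performed inside the proof of Theorem \ref{pr} gives $\intl_0^\a r^{\ell-1}(\a-r)^{q/2}F_0(r)\,dr<\infty$ for every $\a>0$; bounding $(\a-r)^{q/2}$ below by a positive constant on any compact subinterval $[0,\b]\subset[0,\a)$ then yields the local integrability of $r^{\ell-1}F_0$. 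This step produces $\tilde c_1^{-1}\,\Cal D^{q/2}_{+, 2}(s^{n-k-2}\vp) = r^{\ell-2} I^{l/2}_{-, 2} f_0$.

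Multiplication by $r^{2-\ell}$ is invertible without conditions and gives $\tilde c_1^{-1}\, r^{2-\ell}\Cal D^{q/2}_{+, 2}(s^{n-k-2}\vp) = I^{l/2}_{-, 2} f_0$. To close, I would apply $\Cal D^{l/2}_{-, 2}$ via Theorem \ref{78awqe}: the equality $\Cal D^{l/2}_{-, 2} I^{l/2}_{-, 2} f_0 = f_0$ holds a.e. exactly when $f_0$ satisfies (\ref{for10z}) with $\a=l/2$, i.e. $\intl_a^\infty |f_0(r)|\,r^{l-1}\,dr<\infty$ for every $a>0$. This is the second condition in (\ref{pf4e}) directly for large $a$, while the first condition in (\ref{pf4e}) supplies integrability on any compact interval near the origin (there $r^{l-1}$ and $r^{n-j-1}$ are comparable), so the ``for every $a$'' version needed by Theorem \ref{78awqe} follows from the two conditions together. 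Composing the three steps gives (\ref{sjue}). The only genuinely nontrivial point is the local-integrability check in the middle step, which is why it is convenient to lean on the computation already done in Theorem \ref{pr}; the rest is the bookkeeping of assigning the near-origin behavior to the first condition of (\ref{pf4e}) and the decay at infinity to the second.
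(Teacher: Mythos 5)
Your proposal is correct and follows essentially the same route as the paper: both invert the factorization (\ref{pffrrt}) operator by operator, using Theorem \ref{78awqe555} for $\Cal D^{q/2}_{+,2}$ and Theorem \ref{78awqe} for $\Cal D^{l/2}_{-,2}$, with the key verification being that $r^{\ell-1}I^{l/2}_{-,2}|f_0|\in L^1_{loc}(\bbr_+)$ under (\ref{pf4e}). You simply make explicit the local-integrability check (via the estimate $I(\a)<\infty$ from the proof of Theorem \ref{pr}) and the passage from ``for some $a$'' to ``for every $a$'' in (\ref{for10z}), which the paper leaves as ``a simple calculation.''
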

\begin{proof} By (\ref{pffrrt}),
\be\label{lrtvb}  (I_{j,k} f_0)(s)\!=\!\tilde c_1 s^{2+k-n} (I^{q/2}_{+, 2} \, r^{\ell-2} I^{l/2}_{-, 2} f_0)(s).\ee
 Using the finiteness of the integrals in (\ref{pf4e}), through a  simple calculation, we can prove
 $r^{\ell-1} I^{l/2}_{-, 2} |f_0| \in L^1_{loc} (\bbr_+)$. It follows that the conditions of Theorems \ref{78awqe555} and  \ref{78awqe} are satisfied and  both fractional integrals in (\ref{lrtvb}) can be inverted. This gives  (\ref{sjue}).
\end{proof}

Interchanging $j$ and $k$, the reader can easily  arrive the following  similar statement  for the dual transform $\rpl g$.
\begin{theorem}\label {wiuytr2}  Suppose  $p+q+l<n$ and $p+q=j, p+l=k$,  and let
 $g(\z)\equiv g_0 (|\z|)$   be a   radial function on $\agnk$ satisfying the same  integral conditions   (\ref{pf4ez})\,.
  Then function $g_0$ can be recovered from the Radon transform $(\rpl g)(\t)\equiv (I_{k,j} g_0)(|\z|)$  by the formula
\be\label{sjue0}
g_0(t)=\tilde c_2{\!}^{-1}\,(\Cal D^{q/2}_{-, 2} \, r^{2-\ell} \,\Cal D ^{l/2}_{+, 2} \,s^{n-j-2}\,I_{k,j} g_0)(t),\ee
where  $\tilde c_2=\pi^{q/2}\, \Gam ((n-j)/2)/\Gam (\ell/2)$ and the  Erd\'{e}lyi--Kober fractional derivatives $\Cal D^{q/2}_{-, 2}$ and $ \Cal D ^{l/2}_{+, 2}$ are defined by  (\ref{frr+z})-(\ref{frr+z3}).
\end{theorem}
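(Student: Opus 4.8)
The plan is to mirror the proof of Theorem~\ref{wiuytr}, peeling off the scalar weight and the two Erd\'elyi--Kober integrals in (\ref{pffrr1t}) one factor at a time and inverting each with the derivative theorems of Section~2. First I would rewrite the forward relation (\ref{pffrr1t}) in the factored form
\be\label{lrtvb2}
(I_{k,j} g_0)(t)=\tilde c_2\, t^{2+j-n}\,(I^{l/2}_{+, 2} \, r^{\ell-2} I^{q/2}_{-, 2} g_0)(t),
\ee
so that multiplying through by $t^{n-j-2}$ isolates a single outer application of $I^{l/2}_{+,2}$ acting on the function $r^{\ell-2} I^{q/2}_{-,2} g_0$. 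This is the exact image under the interchange $j\leftrightarrow k$, $q\leftrightarrow l$ of the relation (\ref{lrtvb}) used in Theorem~\ref{wiuytr}.

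Before inverting, I would check that the hypotheses of the two inversion theorems hold. The inner integral $I^{q/2}_{-,2}g_0$ is well defined and invertible because the second condition in (\ref{pf4ez}), namely $\int_a^\infty |g_0(s)|\,s^{q-1}\,ds<\infty$, is precisely (\ref{for10z}) with $\a=q/2$, so Theorem~\ref{78awqe} applies. For the outer integral I must verify the local-integrability requirement of Theorem~\ref{78awqe555}, namely
\be
r^{\ell-1}\, I^{q/2}_{-,2}|g_0|\in L^1_{loc}(\bbr_+).\nonumber
\ee
This is the symmetric counterpart of the statement $r^{\ell-1} I^{l/2}_{-,2}|f_0|\in L^1_{loc}(\bbr_+)$ invoked in Theorem~\ref{wiuytr}, and it follows verbatim from the estimate carried out in the proof of Theorem~\ref{pr} establishing (\ref{puor}): one splits the integral at the kernel singularity and bounds the near-origin piece by $\int_0^a|g_0|\,s^{n-k-1}\,ds$ and the tail piece by $\int_a^\infty|g_0|\,s^{q-1}\,ds$, the two finite quantities of (\ref{pf4ez}) (a nonvanishing weight $(\b-r)^{q/2}$ with $\b>\alpha$ upgrades the bound on $\int_\alpha^\beta|I_{k,j}g_0|$ to the pointwise local bound needed here). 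I expect this verification to be the only genuine work; everything that follows is formal manipulation governed by the semigroup relations (\ref{kauky}).

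With both hypotheses secured, the inversion is two successive applications of the left inverses $\Cal D^{l/2}_{+,2}=(I^{l/2}_{+,2})^{-1}$ and $\Cal D^{q/2}_{-,2}=(I^{q/2}_{-,2})^{-1}$. Applying $\Cal D^{l/2}_{+,2}$ to $s^{n-j-2}(I_{k,j}g_0)$ removes the outer integral and weight, leaving $\tilde c_2\, r^{\ell-2} I^{q/2}_{-,2} g_0$; multiplying by $r^{2-\ell}$ cancels the surviving power, and a final application of $\Cal D^{q/2}_{-,2}$ inverts the inner integral to recover $\tilde c_2\, g_0$. Dividing by $\tilde c_2$ gives exactly (\ref{sjue0}). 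Since each step is the mirror image under $j\leftrightarrow k$, $q\leftrightarrow l$ of a step already executed for Theorem~\ref{wiuytr}, no new analytic difficulty is introduced.
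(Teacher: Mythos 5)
Your proposal is correct and follows essentially the same route as the paper: the paper proves Theorem~\ref{wiuytr} by factoring the forward relation as in (\ref{lrtvb}), checking $r^{\ell-1}I^{l/2}_{-,2}|f_0|\in L^1_{loc}(\bbr_+)$ so that Theorems~\ref{78awqe555} and~\ref{78awqe} apply, and then inverting the two Erd\'elyi--Kober factors, and it obtains Theorem~\ref{wiuytr2} by exactly the interchange $j\leftrightarrow k$, $q\leftrightarrow l$ that you carry out. Your explicit verification of the local-integrability hypothesis (via the estimate behind (\ref{puor1})) fills in the step the paper dismisses as ``a simple calculation,'' but introduces no new ideas.
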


\section{Existence theorem}

\subsection{Duality}In this section, we prove the dual relationship between $\rpq$ and $\rpl$ first.   Using the idea of double   fibration ( cf. \cite{He}, P57,  or \cite{RW2}, Lemma 3.4), we can prove the following
 identity, which combined the dual relationship in \cite[Lemma 3.4]{RW2} and \cite[Lemma 2.1]{Ru2} \,.
\begin{theorem}\label{dut}Suppose  $p+q+l<n$ and $p+q=j, p+l=k$, for $f=f(\xi,u)$ on $\agnj$ and $g=g(\eta,v)$ on $\agnk$,
\bea\label{dr}
<\rpq f,g>=<f,\rpl g>,
\eea
 the equality holds if any side of the integral exists in the Lebbesgue sense.
\end{theorem}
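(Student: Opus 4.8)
The plan is to realize both pairings as integrals over a single incidence manifold and to show that the two fibrations induce the same total measure on it; this is the double fibration scheme of \cite[p.~57]{He} and \cite[Lemma 3.4]{RW2}. Set
\[
Z=\{(\t,\z)\in\agnj\times\agnk:\ \t\ \text{incident to}\ \z\},
\]
with projections $\pi_1\colon Z\to\agnj$ and $\pi_2\colon Z\to\agnk$. By the very definitions (\ref{rpq}) and (\ref{rpl}), the fibre $\pi_2^{-1}(\z)$ is parametrized by $(Q,P,u)$ and carries the measure $d_{\eta^\perp}Q\,d_\eta P\,du$ appearing in $\rpq f(\z)$, while $\pi_1^{-1}(\t)$ is parametrized by $(L,P,v)$ and carries $d_{\xi^\perp}L\,d_\xi P\,dv$ appearing in $\rpl g(\t)$. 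Hence, writing out the pairings,
\[
\lng\rpq f,g\rng=\int_{\agnk}\!\Big(\int_{\pi_2^{-1}(\z)}\!(f\circ\pi_1)\Big)g(\z)\,d\z,\qquad \lng f,\rpl g\rng=\int_{\agnj}\!f(\t)\Big(\int_{\pi_1^{-1}(\t)}\!(g\circ\pi_2)\Big)d\t,
\]
and the claim reduces to proving that the two measures $d\mu_2$ (fibre$\,\times\,d\z$) and $d\mu_1$ (fibre$\,\times\,d\t$) on $Z$ coincide.

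To compare them I would introduce the symmetric coordinates on $Z$ in which the incidence structure is manifest. An incident pair is determined by the common direction $P\in G_p(\rn)$ of $\t\cap\z$, a mutually orthogonal pair $Q\in G_q(P^\perp)$ and $L\in G_l([P,Q]^\perp)$, and three translation vectors lying in $L$, in $Q$, and in $W:=[P,Q,L]^\perp$ respectively; here $\xi=[P,Q]$, $\eta=[P,L]$, $\eta^\perp=[Q,W]$, $\xi^\perp=[L,W]$, and $\dim W=n-p-q-l\ge1$ by hypothesis. Decomposing the $\xi^\perp$-translation of $\t$ and the $\eta^\perp$-translation of $\z$ along $\rn=P\oplus Q\oplus L\oplus W$, the two affine planes meet in a $p$-plane precisely when their $W$-components agree, i.e. $\Pr_W u=\Pr_W v=:w$; the remaining components $u\in L$ and $v\in Q$ are free. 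In these coordinates every factor — the Grassmannian measures of total mass $1$ and the Euclidean (Lebesgue) translations — is built from $O(n)$- and translation-invariant pieces, so each of $d\mu_1,d\mu_2$ is invariant under the full isometry group $M(n)$, which acts transitively on $Z$. By uniqueness of the invariant measure on the homogeneous space $Z\cong M(n)/H$ this gives $d\mu_1=c\,d\mu_2$ for a constant $c$.

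The remaining point, and the \emph{main obstacle}, is that $c=1$: the normalizations must be tracked through the passage from the $\z$-fibration to the symmetric coordinates and back to the $\t$-fibration. Rather than compute a single global Jacobian I would factor the verification through the two cases already available, exactly as the radial computation (\ref{rc})--(\ref{F0}) does: the inner integration in $P$ and $u$ reproduces the inclusion-transform duality of \cite[Lemma 2.1]{Ru2}, while the outer integration in $Q$ reproduces the Gonzalez-type duality of \cite[Lemma 3.4]{RW2}; each of these is measure-preserving with its stated normalization, and composing them forces $c=1$. Finally, to license the Fubini interchanges under the stated ``either side exists in the Lebesgue sense'' hypothesis, one first takes $f,g\ge0$ and applies Tonelli, obtaining
\[
\lng\rpq f,g\rng=\int_Z(f\circ\pi_1)(g\circ\pi_2)\,d\mu_2=\int_Z(f\circ\pi_1)(g\circ\pi_2)\,d\mu_1=\lng f,\rpl g\rng,
\]
with both sides simultaneously finite or infinite; the general case follows by splitting $f$ and $g$ into positive and negative (and real and imaginary) parts.
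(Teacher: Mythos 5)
Your overall framework --- realizing both pairings as integrals over the incidence manifold $Z$ and invoking uniqueness of the invariant measure on the homogeneous space $Z\cong M(n)/H$ --- is exactly the double-fibration idea the paper itself cites, and the first two steps of your argument (transitivity of the motion group on $Z$, hence $d\mu_1=c\,d\mu_2$) are sound. The genuine gap is the step you yourself flag as the main obstacle: showing $c=1$, which is the entire quantitative content of the theorem. As written this is asserted rather than proved. The proposed reduction to the inclusion duality of \cite[Lemma 2.1]{Ru2} and the Gonzalez-type duality of \cite[Lemma 3.4]{RW2} does not go through as a formal composition: for fixed $\zeta=(\eta,v)$ the inner $(P,u)$-integration in (\ref{rpq}) becomes an inclusion transform only after restricting to the subspace $Q^\perp$, which varies with the outer variable $Q\in G_q(\eta^\perp)$, which in turn varies with $\eta$; the two ``factors'' therefore do not act on fixed function spaces, and pairing them against $g$ forces you to re-derive precisely the invariance identities the factorization was meant to avoid. (The paper's radial computation (\ref{rc})--(\ref{R0}) does exploit such a factorization, but only because radiality removes the dependence on $Q$ and $\eta$.)

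The cleanest repair --- and what the paper actually does --- is to bypass the constant altogether: start from the single symmetric integral $\int_{O(n)}d\gamma\int_{\bbr^{n-p}}f_\gamma([\bbr^p,\bbr^q]+u)\,g_\gamma([\bbr^p,\bbr^l]+u)\,du$, insert integrations over the stabilizers $O(k)$ and $O(n-k)$ of $\bbr^{k}$ and $\bbr^{n-k}$ (harmless by invariance of the Haar measure), and recognize the result as $\lng \rpq f,g\rng$ directly from (\ref{rpq}); the identification with $\lng f,\rpl g\rng$ is then literally the same computation with $j,k$ and $q,l$ interchanged, so no normalization constant ever appears. Alternatively, within your scheme you could pin down $c$ by evaluating both pairings on the explicit radial pair of Example \ref{de}(iii)--(iv) via Theorems \ref{pr} and \ref{pr22}, which are established independently of the duality. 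Your concluding Tonelli argument for the ``either side exists'' hypothesis is fine.
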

\begin{proof}Let
\bea
\rk=\bbr e_1+\bbr e_2+\cdots + \bbr e_k,~\rnk=\bbr e_{k+1}+\bbr e_{k+2}+\cdots + \bbr e_n\,,\nn
\eea
\bea
\bbr^p=\bbr e_1+\bbr e_2+\cdots +\bbr e_p,~\bbr^q=\bbr e_{n-q+1}+\bbr e_{n-q+2}+\cdots + \bbr e_n,\nn
\eea
\bea
\bbr^l=\bbr e_{p+1}+\bbr e_{p+2}+\cdots + \bbr e_k, ~\bbr^{n-p}=\bbr e_{p+1}+\bbr e_{p+2}+\cdots +\bbr e_n.\nn
\eea
Then $\rk=[\bbr^p,\bbr^l]$, $\bbr^j=[\bbr^p,\bbr^q]$.
We begin with the following integral,
\bea\label{df}
\intl_{O(n)}d\g\intl_{\bbr^{n-p}} f_\g([\bbr^p,\bbr^q]+u)g_\g([\bbr^p,\bbr^l]+u)du,
\eea
where  $f_\gam(\t)=f(\gam \t)$, $g_\gam(\z)=g(\gam \z)$, for any $\gam\in O(n)$.
Notice that $\bbr^{n-p}=\bbr^{n-k}\oplus\bbr^l$,  the upper integral equals
\bea
\intl_{O(n)}d\g\intl_{O(k)}d\rho_1\intl_{O(n-k)}d\rho_2\intl_{\rnk}dv\intl_{\bbr^l} f_{\g\rho_1 \rho_2}([\bbr^p,\bbr^q]+ u+ v)g_{\g\rho_1 \rho_2}([\bbr^p,\bbr^l]+ v))du \nn
\eea
where  $O(k)$,  $O(n-k)$
are the stabilizers of $\rk$ , $\rnk$ in $O(n)$, and $d\rho_1, d\rho_2$ the corresponding  invariant probability  measures,  respectively. Since rotation $\rho_1\rho_2$ leaves subspaces $\rk$  and $\rnk$ invariant, so the last integral equals
\bea
&&\intl_{O(n)}d\g \intl_{\rnk}g_{\g}(\bbr^k+ v)dv\intl_{O(k)}d\rho_1\intl_{O(n-k)}d\rho_2\intl_{\bbr^l} f_{\g}([\rho_1\bbr^p,\rho_2\bbr^q]+ \rho_1 u+ v)du \nn\\
&&=\intl_{O(n)}d\g\intl_{\rnk}g_\g(\bbr^k+ v)\rpq f_\g(\bbr^k+ v)dv=\text{l.s. of ~}(\ref{dr}),\nn
\eea
where the last equality follows from the expression (\ref{rpq}).\\
Similarly, we can also prove
\bea
(\ref{df})=\text{r.s. of  ~} (\ref{dr}) \nn
\eea
Then we finish the proof of the dual equality.
\end{proof}
 Combining   Example \ref{de} and Theorem \ref{dut}, we have the following formulas.
\begin{corollary}
\bea
\intl_{\agnj}\frac{\rpl g(\t)}{|\t|^{\lam}}d\t=c_1\intl_{\agnk}\frac{g(\z)}{|\z|^{\lam-l}}d\z\,,\quad l<\lam<n-j\,;
\eea
\bea
\intl_{\agnk}\frac{\rpq f(\z)}{|\z|^{\lam}}d\z=c_2\intl_{\agnj}\frac{f(\t)}{|\t|^{\lam-q}}d\t\,,\quad q<\lam<n-k\,;
\eea
\bea
\intl_{\agnj}\frac{\rpl g(\t)}{(1+|\t|^2)^{(n-p)/2}}d\t=c_3\intl_{\agnk}\frac{g(\z)}{(1+|\z|^2)^{(n-k-q)/2}}d\z\,;
\eea
\bea\label{let}
\intl_{\agnk}\frac{\rpq f(\z)}{(1+|\z|^2)^{(n-p)/2}}d\z=c_4\intl_{\agnj}\frac{f(\t)}{(1+|\t|^2)^{(n-j-l)/2}}d\t\,,
\eea
where $c_1,c_2,c_3$ and $c_4$ are the same numbers as in Example \ref{de}.
\end{corollary}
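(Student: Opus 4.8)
The plan is to feed the four explicit radial functions computed in Example \ref{de} into the duality identity of Theorem \ref{dut}, which in integral form reads
\[
\intl_{\agnk}(\rpq f)(\z)\,g(\z)\,d\z=\intl_{\agnj}f(\t)\,(\rpl g)(\t)\,d\t\,.
\]
Each of the four asserted formulas will drop out by specializing either $f$ or $g$ to one of the test functions in Example \ref{de} and reading off the corresponding transform from that same example.

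For the first formula I would take $f(\t)=|\t|^{-\lam}$ with $l<\lam<n-j$. By Example \ref{de}(i), $(\rpq f)(\z)=c_1|\z|^{l-\lam}$, so the left side of the duality identity becomes $c_1\intl_{\agnk}|\z|^{l-\lam}g(\z)\,d\z$, while the right side is $\intl_{\agnj}|\t|^{-\lam}(\rpl g)(\t)\,d\t$; equating the two gives exactly the claimed relation. Choosing instead $f(\t)=(1+|\t|^2)^{-(n-p)/2}$ and invoking Example \ref{de}(iii) produces the third formula in the same manner. The second and fourth formulas are obtained symmetrically by specializing $g$ rather than $f$: with $g(\z)=|\z|^{-\lam}$, $q<\lam<n-k$, Example \ref{de}(ii) gives $(\rpl g)(\t)=c_2|\t|^{q-\lam}$ and the right side of the identity becomes $c_2\intl_{\agnj}|\t|^{q-\lam}f(\t)\,d\t$, yielding the second formula; and $g(\z)=(1+|\z|^2)^{-(n-p)/2}$ together with Example \ref{de}(iv) yields the fourth.

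The only delicate point, and the main obstacle, is the hypothesis of Theorem \ref{dut}, which licenses the duality only when one side exists in the Lebesgue sense. I would therefore verify that each chosen test function meets the admissibility conditions (\ref{pf4e}) (for an $f$ on $\agnj$) or (\ref{pf4ez}) (for a $g$ on $\agnk$), so that the relevant transform is defined via Theorem \ref{pr} or Theorem \ref{pr22}. For $f_0(t)=t^{-\lam}$ the two integrals in (\ref{pf4e}) are $\intl_0^a t^{n-j-1-\lam}\,dt$ and $\intl_a^\infty t^{l-1-\lam}\,dt$, finite precisely when $\lam<n-j$ and $\lam>l$; thus the range $l<\lam<n-j$ of Example \ref{de}(i) is exactly the existence range, and the analogous computation with (\ref{pf4ez}) recovers $q<\lam<n-k$ on the $g$-side. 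For the bounded weights $(1+|\cdot|^2)^{-(n-p)/2}$ no parameter restriction is needed: the first integral in (\ref{pf4e}) converges because $n-j-1>-1$, and the second converges because its integrand decays like $t^{l-1-(n-p)}$ with $l-(n-p)=k-n<0$ under the standing assumption $p+q+l<n$; the same holds on the $g$-side. Once admissibility is in place, the four identities follow at once from the two displayed evaluations, completing the proof.
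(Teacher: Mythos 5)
Your proposal is correct and is precisely the argument the paper intends: the corollary is stated as "combining Example \ref{de} and Theorem \ref{dut}," i.e., substituting the four explicit radial test functions into the duality identity and reading off the transforms from the example, exactly as you do. Your additional verification that each test function satisfies the admissibility conditions (\ref{pf4e}) or (\ref{pf4ez}) is a welcome extra care that the paper leaves implicit.
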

Then we come to the following existence theorem.
\begin{theorem}${}$\hfill
\vskip 0.2 truecm

\noindent {\rm (i)} If  $f\in L^s(\agnj)$, $1\le s<(n-j)/l$, then $\rpq f(\z)$ exists for almost all $\z\in \agnk$.

\noindent {\rm (ii)} If $g \in L^r (\agnk)$, $1\le r <(n-k)/q$, then $(\rpl g)(\t)$ exists for almost all $\t\in G(n,j)$.

\noindent The bounds  $s <(n-j)/l$ and $ r <(n-k)/q$ in these statements are sharp.
\end{theorem}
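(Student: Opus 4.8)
The plan is to deduce both existence statements from the Fuglede-type identity (\ref{let}) and its companion (the third displayed formula of the preceding Corollary) combined with H\"older's inequality, and then to establish sharpness by exhibiting radial counterexamples at the endpoint exponents built from Theorem \ref{pr} and Remark \ref{nese}. The mechanism is that each identity converts a weighted integral of $\rpq f$ over $\agnk$ into a weighted integral of $f$ over $\agnj$ whose finiteness can be read off directly from the $L^s$ hypothesis.

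For (i) I would first reduce to $f\ge 0$, since $\rpq$ integrates a nonnegative kernel and $\rpq f(\z)$ exists in the Lebesgue sense as soon as $\rpq|f|(\z)<\infty$. For nonnegative $f$ the identity (\ref{let}), which follows from Example \ref{de}(iii)--(iv) together with the duality Theorem \ref{dut}, holds as an equality in $[0,\infty]$ by Tonelli's theorem:
\[
\intl_{\agnk}\frac{\rpq f(\z)}{(1+|\z|^2)^{(n-p)/2}}\,d\z = c_4\intl_{\agnj}\frac{f(\t)}{(1+|\t|^2)^{(n-j-l)/2}}\,d\t.
\]
It therefore suffices to bound the right-hand side, and H\"older's inequality with exponents $s$ and $s'$ gives
\[
\intl_{\agnj}\frac{f(\t)}{(1+|\t|^2)^{(n-j-l)/2}}\,d\t \le \|f\|_{s}\left(\intl_{\agnj}\frac{d\t}{(1+|\t|^2)^{(n-j-l)s'/2}}\right)^{1/s'}.
\]
Writing $\t=(\xi,u)$ with $u\in\xi^\perp\cong\bbr^{n-j}$ and using that $G_{n,j}$ carries unit mass, the last integral reduces to $\intl_{\bbr^{n-j}}(1+|u|^2)^{-(n-j-l)s'/2}\,du$, which converges exactly when $(n-j-l)s'>n-j$. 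Since $s<(n-j)/l$ yields $1/s'=1-1/s<(n-j-l)/(n-j)$, i.e. $s'>(n-j)/(n-j-l)$, this is precisely the required condition; hence the weighted integral of $\rpq f$ is finite, so $\rpq f(\z)<\infty$ for almost every $\z$. Statement (ii) is identical after interchanging the roles of $j,k$ and of $l,q$, using the companion identity with constant $c_3$ and the threshold $r<(n-k)/q$.

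For sharpness I would construct radial counterexamples at the endpoints. Take $f(\t)=f_0(|\t|)$ with $f_0(t)=t^{-l}(\log t)^{-\b}$ for $t\ge 2$ and $f_0(t)=0$ otherwise. Then $f\in L^{(n-j)/l}(\agnj)$ iff $\intl_2^\infty t^{-1}(\log t)^{-\b(n-j)/l}\,dt<\infty$, i.e. $\b>l/(n-j)$, while the second integral condition of Theorem \ref{pr}, namely $\intl_a^\infty f_0(t)\,t^{l-1}\,dt=\intl_2^\infty t^{-1}(\log t)^{-\b}\,dt$, diverges whenever $\b\le 1$. As $l<n-j$, one may pick $\b$ with $l/(n-j)<\b\le 1$; for such $\b$ the function lies in the endpoint space $L^{(n-j)/l}$ yet, by Remark \ref{nese} (resting on Lemma \ref{lifa2}(ii)), the inner integral $I^{l/2}_{-,2}f_0$ is identically infinite, so $\rpq f(\z)=\infty$ for every $\z$. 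This shows $s<(n-j)/l$ cannot be relaxed to $s\le(n-j)/l$; the symmetric choice $g_0(s)=s^{-q}(\log s)^{-\b}$ disposes of the exponent $r<(n-k)/q$ in (ii).

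The routine parts are the H\"older estimate and the elementary convergence of the power-weight integral. The step requiring the most care is the justification that the dual identity (\ref{let}) persists as an equality in $[0,\infty]$ for an arbitrary nonnegative measurable $f$, rather than only under a priori integrability; here one invokes Tonelli together with the explicit radial evaluation of Example \ref{de} and the duality of Theorem \ref{dut}. Conceptually, the sharpness construction is the crux: it localizes the obstruction to existence in the failure of the Erd\'{e}lyi--Kober integral $I^{l/2}_{-,2}f_0$ to converge, governed by the very exponent threshold $s=(n-j)/l$ that appears in the H\"older bound.
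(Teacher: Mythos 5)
Your proposal is correct and follows essentially the same route as the paper: existence is deduced by applying H\"older's inequality to the right-hand side of the weighted duality identity (\ref{let}), and sharpness is obtained from a radial log-modified power counterexample that violates the second integral condition in (\ref{pf4e}) and hence, via Remark \ref{nese} and Lemma \ref{lifa2}(ii), has $\rpq f\equiv\infty$. The only cosmetic difference is that your single endpoint function $t^{-l}(\log t)^{-\b}$ covers all $s\ge (n-j)/l$ at once, whereas the paper uses the $s$-dependent family $(2+|\t|)^{(j-n)/s}(\log(2+|\t|))^{-1}$.
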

\begin{proof} {\rm (i)} follows  from (\ref{let}) if we apply H\"older's inequality to the right-hand side.  If  $s\ge (n-j)/l$,
the function \[f(\t)=(2+|\t|)^{(j-n)/s}(\log(2+|\t|))^{-1}\]
which belongs to $L^s(\agnj)$ gives a counter-example, because it does not meet the second integral condition in (\ref{pf4e}). The result for $(\rpl g)(\t)$ follows if we interchange $j$ and $k$, $q$ and $l$.
\end{proof}

\section{Strichartz transforms,  Riesz Potentials and Semyanistyi  integrals}
In this part, we devote to the interesting  formulas  that connect Strichartz  Radon transforms $\rpq$, Semyanistyi type integrals and the Riesz Potentials.  Fuglede type formulas about $\rpq$ will also be considered, through which we can reconstruct function $f$ from $\rpq f$, when $f$ belongs to the range of Radon-John $j$-plane transforms.  These formulas generalize the corresponding conclusions in \cite{Ru1} and \cite{RW2}.
\subsection{Strichartz Transforms with Riesz Potentials}
\begin{theorem} \label {dwalg2} If $\;0<\a<n-k-q$, then
\be\label{wleiy2}
\rpq I_{n-j}^\a f(\eta,v)= I_{n-k}^\a\rpq f(\eta,v), \qquad (\eta,v)\in \agnk,\ee
provided that either side of this equality exists in the Lebesgue sense.
\end{theorem}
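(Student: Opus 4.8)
The identity to be proved, $\rpq I_{n-j}^\a f = I_{n-k}^\a \rpq f$ on $\agnk$, is a Fuglede-type commutation relation between the Strichartz transform and Riesz potentials acting on the fibers of the two Grassmann bundles. My plan is to reduce it to the two commutation relations that are already available in the excerpt, namely the Radon--John relation $R_k I_n^\a f = I_{n-k}^\a R_k f$ of Theorem \ref{fug2} and the ``inclusion'' and ``Gonzalez'' analogues that underlie the radial computation of Section 3. The cleanest route exploits the factorization of $\rpq$ that is implicit in formulas (\ref{rc})--(\ref{R0}): for a fixed background subspace $\eta\in\gnk$, the map $v\mapsto \rpq f(\eta,v)$ on the fiber $\eta^\perp\cong\rnk$ is a composition of a $p$-to-$k$ inclusion transform (acting in a subspace $Q^\perp$) followed by a dual $q$-plane transform in $\eta^\perp$. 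Riesz potentials on the fiber are rotation-invariant, so they should intertwine with each constituent transform separately, and the full relation will follow by composing the fiberwise intertwining identities.

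First I would set up the fiber decomposition carefully. The Riesz potential $I_{n-k}^\a$ on the right-hand side acts only in the $v$-variable on $\eta^\perp$, while $I_{n-j}^\a$ on the left acts in the $u$-variable on $\xi^\perp$; both are genuine $(n-k)$- and $(n-j)$-dimensional Riesz potentials respectively. I would then write $\rpq$ as the iterated integral (\ref{rpq}), insert $I_{n-j}^\a f$ in place of $f$, and move the potential past the innermost integration over $P^\perp\cap\eta$ and over the Grassmannians $G_p(\eta)$, $G_q(\eta^\perp)$. The key technical device is that integration over a fixed-dimensional plane commutes with a Riesz potential of the ambient space up to passing to the Riesz potential of the quotient fiber — this is exactly the content of (\ref{ktyu}) applied in the relevant subspace, combined with the Gonzalez-type relation (\ref{wleiy}) which handles the orthogonal ``complementary'' directions. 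The dimension bookkeeping $n-j=(n-k)+(k-j)=(n-k)+(l-p)+\dots$ must be tracked so that the exponents match; the constraint $0<\a<n-k-q$ is precisely what guarantees that each intermediate Riesz potential exists in the Lebesgue sense and that no forbidden exponent ($\a=n,n+2,\dots$ on any fiber) is hit.

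An alternative and perhaps more economical route is to verify the identity first on radial functions, where both sides reduce to Erd\'elyi--Kober operators via Theorem \ref{pr}, and then bootstrap to general $f$ by the standard ``reduction to radial'' or duality argument. On radial functions, $\rpq f$ becomes the explicit operator $I_{j,k}$ of (\ref{pffrrt}), a product $s^{2+k-n}I_{+,2}^{q/2}\,r^{\ell-2}I_{-,2}^{l/2}$, while the Riesz potentials $I_{n-j}^\a$ and $I_{n-k}^\a$ restricted to radial functions are themselves Erd\'elyi--Kober-type integrals $I_{-,2}^{\a/2}$ up to explicit power weights. The desired commutation then becomes an identity among one-dimensional fractional integrals, provable by the semigroup property (\ref{kauky}) together with routine manipulation of the weight factors $r^{\ell-2}$, $s^{2+k-n}$; this is where I would lean on the explicit constants in Example \ref{de}, whose computation already encodes exactly these compositions.

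The main obstacle I anticipate is the noncommutativity of the weight factors with the Erd\'elyi--Kober operators: in (\ref{pffrrt}) the power $r^{\ell-2}$ is sandwiched between $I_{+,2}^{q/2}$ and $I_{-,2}^{l/2}$, and Riesz potentials do not simply pass through such a weighted product. Making the exponents line up — showing that conjugating by the weights converts the ambient-potential shift into precisely the fiber-potential shift — is the delicate computation, and verifying that the two-parameter Erd\'elyi--Kober factors recombine correctly (rather than producing spurious cross terms) is where the bulk of the real work lies. The rigorous justification of Fubini and of moving $I_n^\a$ inside the Grassmannian integrals, under only the one-sided Lebesgue-existence hypothesis, is the second point requiring care; here the positivity reduction $f\ge 0$ used in the proof of Theorem \ref{pr} should again make all interchanges legitimate.
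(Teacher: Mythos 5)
Your submission is a strategy sketch rather than a proof: both routes are left incomplete, and you explicitly defer ``the bulk of the real work.'' The paper's actual argument is much more elementary than either route you outline. It computes both sides directly from the definitions (\ref{rpq}) and (\ref{rpot}): on the left, the Riesz integral over $\xi^\perp=P^\perp\cap Q^\perp$ is split along the orthogonal decomposition $P^\perp\cap Q^\perp=(P^\perp\cap\eta)\oplus(Q^\perp\cap\eta^\perp)$ and the $(P^\perp\cap\eta)$-variable (dimension $l$) is integrated out using
\be
\intl_{\bbr^m}\frac{dx}{|x+y|^{s}}=c\,|y|^{m-s},\qquad y\perp\bbr^m,\ \ 0<m<s,\nonumber
\ee
while on the right the Riesz integral over $\eta^\perp$ is split along $\eta^\perp=Q\oplus(Q^\perp\cap\eta^\perp)$ and the $Q$-variable (dimension $q$) is integrated out. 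Since $n-j-l=n-k-q$, both sides collapse to the same kernel $|y|^{-(n-k-q-\a)}$ on $Q^\perp\cap\eta^\perp$ with identical constants, and the hypothesis $0<\a<n-k-q$ is exactly what makes the displayed one-dimensional integral converge. None of the machinery you invoke (Theorem \ref{dwalg}, the Erd\'elyi--Kober factorization, Example \ref{de}) is needed.

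Two concrete gaps in what you propose. First, your ``cleanest route'' factors $\rpq$ fiberwise through a $p$-to-$k$ inclusion transform in $Q^\perp$ followed by a dual $q$-plane transform in $\eta^\perp$; this factorization can be made to work for general $f$, but the intertwining relation you would then need for the inner factor is the \emph{inclusion} case $q=0$ (i.e.\ (\ref{wleiy3})), which in this paper is only a \emph{corollary} of the theorem you are proving and is not among the available prior results -- citing the Gonzalez-case relation (\ref{wleiy}) instead does not handle that factor, so as written the argument is either circular or uses the wrong lemma. Second, your fallback route (verify on radial functions, then ``bootstrap to general $f$'') has a genuine logical hole: two motion-equivariant operators between these Grassmannian bundles are not determined by their restrictions to radial functions, and you give no actual duality or density argument that would upgrade the radial identity of Theorem \ref{pr} to the identity (\ref{wleiy2}) for arbitrary $f$. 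The radial computation is consistent with the theorem but does not prove it.
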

\begin{proof}Through the expression (\ref{rpq}) and (\ref{rpot}), the left hand of (\ref{wleiy2})
\bea\label{ri}
&&\rpq I_{n-j}^\a f(\eta,v)=\intl_{G_q({\eta^\perp})}d_{\eta^\perp} Q\intl_{G_p(\eta)} d_\eta P\intl_{P^\perp\cap \eta} I_{n-j}^\a f([P,Q]+u+v)du\nonumber\\
&&=\frac{1}{\g_{n-j}(\a)}\intl_{G_q({\eta^\perp})}d_{\eta^\perp} Q\intl_{G_p(\eta)} d_\eta P\intl_{P^\perp\cap \eta}du \intl_{P^\perp\cap Q^{\perp}}\frac{f([P,Q]+u+v-x)}{|x|^{n-j-\a}}dx\,.\nn
\eea
Note that $Q^\perp=\eta\oplus (Q^\perp\cap \eta^\perp)$, then the last integral equals
\bea
&&\frac{1}{\g_{n-j}(\a)}\intl_{G_q({\eta^\perp})}d_{\eta^\perp} Q\intl_{G_p(\eta)} d_\eta P\intl_{P^\perp\cap \eta}du \intl_{P^\perp\cap \eta}dx\intl_{Q^{\perp}\cap \eta^\perp}\frac{f([P,Q]+u+v-y))}{|x+y|^{n-j-\a}}dy\nn\\
&&=c_1\intl_{G_q({\eta^\perp})}d_{\eta^\perp} Q\intl_{G_p(\eta)} d_\eta P\intl_{P^\perp\cap \eta}du \intl_{Q^{\perp}\cap \eta^\perp}\frac{f([P,Q]+u+v-y)}{|y|^{n-k-q-\a}}dy\,
\eea
where $$c_1=\frac{\sig_{l-1}}{\g_{n-j}(\a)}\int_0^\infty \frac{x^{l-1}}{(1+x^2)^{(n-j-\a)/2}}dx=\frac{\Gam((n-j-l-\a)/2)}{2^{\a}\pi^{(n-j-l)/2}\Gam(\a)}.$$

Similarly, we calculate the right side of (\ref{wleiy2}),
\bea\label{ir}
&&I_{n-k}^\a\rpq f(\eta,v)=\intl_{\eta^\perp}\frac{\rpq f(\eta,v-w)}{|w|^{n-k-\a}}dw\nn\\
&&=\frac{1}{\g_{n-k}(\a)}\intl_{\eta^\perp}\frac{dw}{|w|^{n-k-\a}}\intl_{G_q({\eta^\perp})}d_{\eta^\perp} Q\intl_{G_p(\eta)} d_\eta P\intl_{P^\perp\cap \eta} f([P,Q]+u+v-w)du\nn
\eea
Note that $\eta^\perp=Q\oplus (Q^\perp\cap \eta^\perp)$, we interchange the integrals order and get
\bea
&&\frac{1}{\g_{n-k}(\a)}\intl_{G_q({\eta^\perp})}d_{\eta^\perp} Q\intl_{G_p(\eta)} d_\eta P\intl_{P^\perp\cap \eta}du \intl_{\eta^\perp\cap
Q^\perp}dy\intl_{Q}\frac{f([P,Q]+u+v-y)}{|x+y|^{n-k-\a}}dx\nn\\
&&=c_2\intl_{G_q({\eta^\perp})}d_{\eta^\perp} Q\intl_{G_p(\eta)} d_\eta P\intl_{P^\perp\cap \eta}du \intl_{\eta^\perp\cap Q^\perp}\frac{f([P,Q]+u+v-y)}{|y|^{n-k-q-\a}}dy\,
\eea
where $$c_2=\frac{\sig_{q-1}}{\g_{n-k}(\a)}\int_0^\infty \frac{x^{q-1}}{(1+x^2)^{(n-k-\a)/2}}dx=\frac{\Gam((n-k-q-\a)/2)}{2^{\a}\pi^{(n-k-q)/2}\Gam(\a)}\,.$$
Comparing the two expressions (\ref{ri}) and (\ref{ir}), we get the equality (\ref{wleiy2}).
\end{proof}
\begin{remark}  ${}$

\vskip 0.2 truecm

\noindent{1.}The differential form of (\ref{wleiy2}) was first studied by Gonzalez  under the assumption that $f$ is infinitely differentiable and compactly supported; cf. \cite[Section 4]{G4}.
\vskip 0.2 truecm

\noindent{2.}
At the case of $p=0$, that is, ``Gonzalez" case,  equality (\ref{wleiy2}) coincides  with the conclusion in   Theorem   \ref{dwalg}.
At another case $q=0$, we get the intertwining formula for the ``inclusion" Radon transforms\,.
\end{remark}
\begin{corollary} For $f=f(\xi,u)$ a function on $\agnj$,
\be\label{wleiy3}
R_{j,k} I_{n-j}^\a f(\eta,v)= I_{n-k}^\a R_{j,k} f(\eta,v), \qquad (\eta,v)\in \agnk,\ee
provided that either side of this equality exists in the Lebesgue sense.
\end{corollary}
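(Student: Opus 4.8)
The plan is to obtain this corollary as the degenerate case $q=0$ of Theorem \ref{dwalg2}, which has just been established for the full Strichartz transform $\rpq$. First I would impose $q=0$ in the standing relations $p+q=j$ and $p+l=k$; this forces $p=j$ and $l=k-j$. With $q=0$ the inner Grassmannian $G_q(\eta^\perp)=G_0(\eta^\perp)$ consists of the single zero subspace $Q=\{0\}$, so the outermost integration $\intl_{G_q(\eta^\perp)}d_{\eta^\perp}Q$ in the defining formula (\ref{rpq}) contributes only the trivial term $Q=\{0\}$, its invariant measure having total mass $1$. Consequently $[P,Q]=P$ with $P\in G_p(\eta)=G_j(\eta)$, and (\ref{rpq}) collapses exactly to the inclusion transform (\ref{rkr}); that is, $\rpq f=R_{j,k}f$ when $q=0$.

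Next I would track the hypothesis through the specialization. The admissibility range $0<\a<n-k-q$ of Theorem \ref{dwalg2} becomes $0<\a<n-k$ under $q=0$, which is the natural constraint for the inclusion transform. Substituting $\rpq=R_{j,k}$ into the conclusion (\ref{wleiy2}) then yields (\ref{wleiy3}) verbatim, with the existence clause inherited directly from that of Theorem \ref{dwalg2}. In this sense the corollary is a purely formal reading of the general result along the boundary value $q=0$ already flagged in the preceding remark.

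The only point requiring genuine care, rather than being entirely mechanical, is to confirm that the $q\to 0$ limit is compatible with the constants and fiber decompositions used in the proof of Theorem \ref{dwalg2}. Concretely I would verify that the auxiliary constant appearing there, $c_2=\Gam((n-k-q-\a)/2)/(2^{\a}\pi^{(n-k-q)/2}\Gam(\a))$, reduces to $\Gam((n-k-\a)/2)/(2^{\a}\pi^{(n-k)/2}\Gam(\a))$ and coincides with $c_1$ at $q=0$ (using $n-j-l=n-k$), so that the two sides of (\ref{wleiy2}) produce the same normalizing factor. I would also check that the splitting $\eta^\perp=Q\oplus(Q^\perp\cap\eta^\perp)$ exploited there degenerates consistently to $\eta^\perp=Q^\perp\cap\eta^\perp$. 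Since each of these reductions is continuous in the parameters and no Gamma factor develops a pole in the stated range $0<\a<n-k$, the specialization is legitimate and the corollary follows without any further computation.
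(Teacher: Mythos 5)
Your proposal is correct and matches the paper's intent exactly: the corollary is obtained by specializing Theorem \ref{dwalg2} to $q=0$ (so $p=j$, $l=k-j$, and $\rpq$ collapses to $R_{j,k}$), which is precisely what the preceding remark in the paper flags and all the paper offers by way of proof. Your extra check that the $Q$-integration degenerates consistently (the product $\sig_{q-1}\int_0^\infty x^{q-1}(1+x^2)^{-(n-k-\a)/2}dx$ tends to $1$ as $q\to 0$, so $c_2\to 1/\gam_{n-k}(\a)=c_1|_{q=0}$) is the right point to verify, since the standing hypothesis of the theorem formally requires $q>0$.
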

\subsection{Strichartz transforms with Semyanistyi type integrals} In this part, we devote to the relationship between  Strichartz transforms and  Semyanistyi type integrals with the following expressions:
\bea\label{sym4}
(P_k^\a f)(\z)=\frac{1}{\gam_{n-k}(\a)}\intl_{\rn} f(x)|x-\z|^{\a+k-n}dx\,,\nonumber\\
(P_k^{\a*} \vp)(x)=\frac{1}{\gam_{n-k}(\a)}\intl_{\agnk} \vp(\z)|x-\z|^{\a+k-n}dx\,,
\eea
where $\gam_{n-k}(\a)=\frac{2^\a\pi^{(n-k)/2}\Gam(\a/2)}{\Gam((n-k-\a)/2)}$\,, $\text{Re}\, \a>0$, $\a+k-n\neq 0,2,4,...\,.$ We recall some known formulas  about the Semyanistyi integrals, which  connect them  with the $k$-plane transforms and Riesz potentials,
\bea\label{syme}
P_k^\a f=I_{n-k}^\a R_k f\,,
P_k^{\a*} \vp=R_k^* I_{n-k}^{\a}\vp\,,
\eea
where $f$ is a function on $\rn$, $g$ a function on $\agnk$ and the equalities hold if and only if either side of the formulas  exists in the Lebesgue sense\,.
Combined with  Theorem \ref{fug},
\bea\label{sym2}
P_k^{\a*}R_k f=c_{k,n}I_n^{k+\a}f\,,
\eea
\bea\label{sym3}
R_k^*P_k^{\a} f=c_{k,n}I_n^{k+\a}f\,,
\eea
where $c_{k,n}=(2\pi)^k \sig_{n-k-1}/\sig_{n-1}$\,,  see \cite[Section 3]{Ru1} for more  detailed information. Next, we will generalize these formulas to Strichartz type transform.

The following theorem connects Strichartz transforms and the dual transforms of Semyanistyi type integrals.
\begin{theorem}\label{tr0}For a function $f=f(\t)$ on $\agnj$,
\bea\label{fug0}
P_k^{\a*}\rpq f=c P_j^{(\a+l)*} f\,,\quad c=\frac{2^l \pi^{l/2}\Gam((n-j)/2)}{\Gam((n-j-l)/2)}\,,
\eea
provided that either side of this equality exists in the Lebesgue sense.
\end{theorem}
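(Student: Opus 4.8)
The plan is to reduce the identity $P_k^{\a*}\rpq f = c\,P_j^{(\a+l)*}f$ to the already-established intertwining relations by rewriting both Semyanistyi operators via (\ref{syme}) and then applying the duality of Theorem \ref{dut} together with the Fuglede-type and Riesz-potential identities. First I would expand the left side using the second formula in (\ref{syme}), namely $P_k^{\a*}\vp = R_k^* I_{n-k}^\a \vp$, applied to $\vp = \rpq f$. This gives
\be
P_k^{\a*}\rpq f = R_k^* I_{n-k}^\a \rpq f\,.\nonumber
\ee
Now I would invoke Theorem \ref{dwalg2}, the Strichartz--Riesz intertwining relation (\ref{wleiy2}), which (under $0<\a<n-k-q$) lets me pull the Riesz potential through the Strichartz transform: $I_{n-k}^\a \rpq f = \rpq I_{n-j}^\a f$. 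Hence the left side becomes $R_k^* \rpq I_{n-j}^\a f$, so the whole matter rests on understanding the composition $R_k^*\circ \rpq$.

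The key step is to identify $R_k^*\rpq$ as a dual $j$-plane transform up to a constant, or more precisely to relate it to $R_j^*$ composed with a fiberwise Erd\'elyi--Kober factor. Here I would use the radial reduction of Theorem \ref{pr}: on radial functions $\rpq$ acts as the operator $I_{j,k}$ in (\ref{pffrrt}), which factors through $I^{l/2}_{-,2}$ and $I^{q/2}_{+,2}$. The cleanest route is to test the claimed identity against the radial profiles, since both $P_k^{\a*}\rpq f$ and $P_j^{(\a+l)*}f$ are determined by their behavior on the family of test functions $f(\t)=(1+|\t|^2)^{-(n-p)/2}$ and the power functions $f(\t)=|\t|^{-\lam}$ treated in Example \ref{de}; the duality formulas in the Corollary following Theorem \ref{dut} provide exactly the pairings needed. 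Concretely, pairing $P_k^{\a*}\rpq f$ against a suitable probe and using (\ref{sym2})--(\ref{sym3}) reduces the constant to a ratio of Gamma factors, which I expect to collapse to $c = 2^l\pi^{l/2}\Gam((n-j)/2)/\Gam((n-j-l)/2)$.

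The main obstacle will be the bookkeeping that identifies the fiber dimension shift correctly: the dual Semyanistyi integral $P_k^{\a*}$ lives over the $(n-k)$-dimensional fiber while $P_j^{(\a+l)*}$ lives over the $(n-j)$-dimensional fiber, and the discrepancy $l=k-j$ in the fiber is precisely what the order boost $\a\mapsto\a+l$ absorbs. Making this rigorous means tracking how the orthogonal $l$-plane component $R=P^\perp\cap\eta$ integrates out in (\ref{inc}), producing the Erd\'elyi--Kober factor $I^{l/2}_{-,2}$ of (\ref{F0}) and hence the extra power $l$ in the Riesz order, together with the constant $\sig_{l-1}$-type factors that assemble into $c$. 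I would carry this out by the same double-fibration argument used in Theorem \ref{dut}: write $P_k^{\a*}\rpq f(x)$ as an integral over $O(n)$, insert the definition (\ref{rpq}), interchange the order of integration so that the inner integral over the $l$-plane directions collapses into a Riesz kernel of the boosted order $\a+l$ on the $(n-j)$-fiber, and read off the constant from the resulting Gamma-function computation analogous to the evaluation of $c_1$ and $c_2$ in the proof of Theorem \ref{dwalg2}. The analytic justification (Lebesgue existence, Fubini) follows the pattern already invoked there and in Theorem \ref{pr}.
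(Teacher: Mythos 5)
Your proposal follows the paper's own route: reduce via $P_k^{\a*}=R_k^*I_{n-k}^\a$ and the intertwining relation of Theorem \ref{dwalg2} to the single identity $R_k^*\rpq f=c\,P_j^{l*}f$, and then establish that identity by the $O(n)$-integral (double-fibration) computation that collapses the $\bbr^l$-integration into the Riesz kernel $|u|^{-(n-j-l)}$ on the $(n-j)$-dimensional fiber. The radial-testing detour you sketch is unnecessary (and would need extra equivariance arguments to be rigorous); the direct group-integral computation you describe at the end is exactly what the paper does.
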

\begin{proof}
Through Theorem \ref{dwalg2}  and the second equality in (\ref{syme}), to prove Theorem \ref{tr0}, we just need to prove for a good function $f(\t)$ on $\agnj$\,, the following equality hold,
\bea\label{rrpq}
R_k^*\rpq f=c P_j^{l*} f\,,\quad c=\frac{2^l \pi^{l/2}\Gam((n-j)/2)}{\Gam((n-j-l)/2)}\,.
\eea
In fact, through the definition of  $\rpq$ (\ref{rpq}) and $R_k^*$  (\ref{mmsdcrt}),
\bea
&&R_k^*\rpq f(x)=\intl_{O(n)}\rpq f(\g \bbr^k+x)\,d\g\nn\\
&&=\intl_{O(n)}d\g\intl_{O(k)}d\rho_1\intl_{O(n-k)}d\rho_2\intl_{\bbr^l}f(\g([\rho_1 \bbr^p, \rho_2\bbr^q]+\rho_1 u)+x)du\nn\\
&&=\intl_{O(n)}d\g\intl_{\bbr^l}f(\g(\bbr^j+u)+x)du=c\intl_{\gnj}d\xi\intl_{\xi^\perp}\frac{f(\xi+u+x)}{|u|^{n-j-l}}du\,,
\nn
\eea
where $c=\frac{2^l \pi^{l/2}\Gam((n-j)/2)}{\Gam((n-j-l)/2)}$\,.
Owing to the second formula in  (\ref{syme}), equality (\ref{rrpq}) is proved. Then (\ref{fug0}) follows.
\end{proof}
\begin{remark}  ${}$

\vskip 0.2 truecm

\noindent{1.}
In the case of $p=q=0$, in the meantime $j=0$ and  $l=k$, formula (\ref{fug0}) coincides with the formula (\ref{sym2}).

\vskip 0.2 truecm

\noindent{2.}
From the proof of Theorem \ref{tr0}, we have the following intertwining relation, which connects   Strichartz  transforms with  the dual $k$-plane transforms and the dual Semyanistyi type integrals.
\end{remark}
\begin{corollary}\label{tr1}For a function $f=f(\t)$ on $\agnj$,
\bea\label{fug1}
R_k^*\rpq f=c P_j^{l*} f\,,\quad c=\frac{2^l \pi^{l/2}\Gam((n-j)/2)}{\Gam((n-j-l)/2)}\,,
\eea
provided that either side of this equality exists in the Lebesgue sense.
\end{corollary}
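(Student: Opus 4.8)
The plan is to observe that the asserted identity (\ref{fug1}) is nothing other than the intermediate formula (\ref{rrpq}) already derived inside the proof of Theorem \ref{tr0}; thus the corollary needs no genuinely new argument and follows simply by isolating that step, the existence proviso being inherited verbatim. For completeness I would nevertheless reproduce the self-contained derivation, which proceeds by group averaging.

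First I would expand the left-hand side through the definition (\ref{mmsdcrt}) of the dual $k$-plane transform, writing $R_k^*\rpq f(x)=\intl_{O(n)}\rpq f(\g\bbr^k+x)\,d\g$, and then insert the defining integral (\ref{rpq}) for $\rpq$. The structural point is that the integrations over the Grassmannians $G_q(\eta^\perp)$ and $G_p(\eta)$ may be realized as averages of the stabilizer subgroups $O(k)$ and $O(n-k)$ acting on the fixed reference subspaces $\bbr^p$, $\bbr^q$, $\bbr^l$, with $\bbr^j=[\bbr^p,\bbr^q]$ and $\bbr^k=[\bbr^p,\bbr^l]$. Next I would invoke the invariance of the Haar measure on $O(n)$ to absorb the inner rotations $\rho_1,\rho_2$ into the outer variable $\g$; this collapses both Grassmannian integrals and leaves only $\intl_{O(n)}d\g\intl_{\bbr^l}f(\g(\bbr^j+u)+x)\,du$.

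The final and most delicate step is to recast this integration over $O(n)\times\bbr^l$ — where $\bbr^l$ sits as an $l$-dimensional subspace of the $(n-j)$-dimensional fiber $(\bbr^j)^\perp$ — as a weighted integral over the whole fiber, namely $c\intl_{\gnj}d\xi\intl_{\xi^\perp}\frac{f(\xi+u+x)}{|u|^{n-j-l}}\,du$. The constant $c=2^l\pi^{l/2}\Gam((n-j)/2)/\Gam((n-j-l)/2)$ is produced here, by expressing the flat $l$-dimensional integral as a radial integral against the kernel $|u|^{l-(n-j)}$ over the full $(n-j)$-dimensional fiber. Once in this form, the second equality in (\ref{syme}) gives $R_j^*I_{n-j}^l f=P_j^{l*}f$, which identifies the right-hand side as $cP_j^{l*}f$, exactly the claim (\ref{fug1}).

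I expect the only real obstacle to be this last measure-conversion step: one must keep careful track of the normalizations of the $O(n)$-invariant probability measures and of the polar decomposition on the fiber so that the Gamma-function constant emerges with precisely the stated value. The preceding averaging manipulations are routine consequences of Haar invariance, and the recognition of the resulting kernel as a dual Semyanistyi integral via (\ref{syme}) is then immediate.
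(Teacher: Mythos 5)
Your proposal is correct and follows essentially the same route as the paper: the paper itself obtains the corollary as the intermediate identity (\ref{rrpq}) inside the proof of Theorem \ref{tr0}, established by exactly the group-averaging computation you describe (expanding $R_k^*$ and $\rpq$, absorbing the stabilizer rotations into the Haar integral over $O(n)$, converting the flat $\bbr^l$-integral into a weighted integral over the fiber $\xi^\perp$, and identifying the result via (\ref{syme})). No substantive differences to report.
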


The following theorem
 connecting  Strichartz transforms with the  Semyanistyi type integrals.
 \begin{theorem}\label{tr21}For a suitable function $f=f(x)$ on $\rn$,
\bea
\rpq P^{\a}_j f(\eta,v)=cP_k^{q+\a}\,f(\eta,v)\,, \quad c=\frac{2^q\pi^q\Gam((n-k)/2)}{\Gam((n-k-q)/2)}\,.
\eea
provided that either side of this equality exists in the Lebesgue sense.
 \end{theorem}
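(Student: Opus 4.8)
The plan is to reduce the identity to a single ``key lemma'' that computes the composition $\rpq R_j$ of the Strichartz transform with the Radon--John $j$-plane transform, and then to recover the general statement by dressing it with Riesz potentials. First I would write $P^\a_j f = I_{n-j}^\a R_j f$ by the first relation in (\ref{syme}); since $R_j f$ is a function on $\agnj$ and $0<\a<n-k-q$, Theorem \ref{dwalg2} (applied with $R_j f$ in place of $f$) lets me move the potential through the Strichartz transform, giving $\rpq P^\a_j f = I_{n-k}^\a\,\rpq R_j f$. Granting the key lemma
\[
\rpq R_j f = c\, P_k^q f = c\, I_{n-k}^q R_k f, \qquad c=\frac{\sig_{q-1}}{\sig_{n-k-1}}\,\gam_{n-k}(q),
\]
the semigroup property $I_{n-k}^\a I_{n-k}^q = I_{n-k}^{q+\a}$ on the $(n-k)$-dimensional fiber and one further use of (\ref{syme}) yield $\rpq P^\a_j f = c\,I_{n-k}^{q+\a}R_k f = c\,P_k^{q+\a}f$, which is the assertion. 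Thus everything rests on the ``$\a=0$'' identity $\rpq R_j f = c\,P_k^q f$.

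To prove the key lemma I would expand $\rpq R_j f(\eta,v)$ via (\ref{rpq}) together with $(R_j g)(\tau)=\intl_\tau g$, so that the integrand is $f$ integrated over the $j$-plane $[P,Q]+u+v$ with $P\in G_p(\eta)$, $Q\in G_q(\eta^\perp)$, $u\in P^\perp\cap\eta$. The first simplification is that, for fixed $\eta$ and $Q$, the joint integral over $P\in G_p(\eta)$, over $w_P\in P$ and over $u\in P^\perp\cap\eta$ collapses: because $P$ and $P^\perp\cap\eta$ are orthogonal complements inside the $k$-space $\eta$ (recall $p+l=k$), the map $(w_P,u)\mapsto w_P+u$ is measure preserving onto $\eta$, so the integrand ceases to depend on $P$ and the $d_\eta P$-average (of total mass one) disappears. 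Writing $w\in Q\subset\eta^\perp$ one is left with
\[
\rpq R_j f(\eta,v)=\intl_{G_q(\eta^\perp)} d_{\eta^\perp}Q \intl_Q (R_k f)(\eta,\,w+v)\,dw,
\]
since the inner integral over $\eta$ is exactly the $k$-plane transform evaluated at $w+v\in\eta^\perp$.

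It then remains to evaluate the averaging operator $\psi\mapsto \intl_{G_q(\eta^\perp)}d_{\eta^\perp}Q\intl_Q\psi(w+v)\,dw$ on the fiber function $\psi=(R_kf)(\eta,\cdot)$ on $\eta^\perp\cong\rnk$. Testing on radial $\psi$ and passing to polar coordinates, $\intl_Q\psi\,dw$ is independent of $Q$ and equals $\sig_{q-1}\intl_0^\infty\psi_0(r)r^{q-1}\,dr$; matching this against $\intl_{\rnk}\psi_0(|x|)K(|x|)\,dx$ forces the radial kernel $K(r)=(\sig_{q-1}/\sig_{n-k-1})\,r^{q-(n-k)}$, so the averaging operator equals $(\sig_{q-1}/\sig_{n-k-1})\,\gam_{n-k}(q)\,I_{n-k}^q$ on $\eta^\perp$. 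With $I_{n-k}^q R_k f=P_k^q f$ this gives $\rpq R_j f=c\,P_k^q f$, and simplifying the gamma factors puts $c$ in closed form (matching, under $j\leftrightarrow k$ and $l\leftrightarrow q$, the constant of Theorem \ref{tr0}). I expect the main obstacle to be precisely this last geometric step: justifying the measure-preserving collapse of the $P$-integration and, above all, identifying the $Q$-averaging with the fiber Riesz potential $I_{n-k}^q$ together with its exact normalizing constant. Throughout, Fubini's theorem is used freely, which is legitimate under the stated Lebesgue-existence hypothesis on either side of the equality.
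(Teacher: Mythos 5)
Your proposal follows essentially the same route as the paper: both reduce the statement to the ``$\a=0$'' identity $\rpq R_j f=c\,P_k^q f$ (the paper's Corollary \ref{tr2}), and both prove it by expanding $\rpq R_j f$, collapsing the $P$-integration via $P\oplus(P^\perp\cap\eta)=\eta$, and identifying the $Q$-averaging over $G_q(\eta^\perp)$ with the kernel $\frac{\sig_{q-1}}{\sig_{n-k-1}}|y|^{k+q-n}$ on the fiber $\eta^\perp$, i.e.\ with $\gam_{n-k}(q)I_{n-k}^q$. Your constant $c=\frac{2^q\pi^{q/2}\Gam((n-k)/2)}{\Gam((n-k-q)/2)}$ is the one actually produced by this computation and is consistent with Theorem \ref{tr0} under $j\leftrightarrow k$, $l\leftrightarrow q$; the factor $\pi^{q}$ in the paper's statement appears to be a typo for $\pi^{q/2}$.
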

 \begin{proof}
 Using Theorem (\ref{fug}), through the definition of Semyanistyi integral (\ref{sym4}),
 to prove this theorem we just need to prove the following equality holds for a good function $f$ on $\rn$,
 \bea
\rpq R_j f(\eta,v)=cP_k^q\,f(\eta,v)\,, \quad c=\frac{2^q\pi^q\Gam((n-k)/2)}{\Gam((n-k-q)/2)}\,.
\eea
In fact,
\bea
&&\rpq R_j f(\eta,v)=\intl_{G_{q}(\eta^\perp)}d_{\eta^\perp}Q\intl_{G_p{(\eta)}} d_\eta P\intl_{P^\perp\cap \eta} R_jf([P,Q]+u+v)du\nonumber\\
&&=\intl_{G_{q}(\eta^\perp)}d_{\eta^\perp}Q\intl_{G_p{(\eta)}} d_{\eta}P\intl_{P^\perp\cap \eta}du \intl_P\,dx\intl_Q f(x+y+u+v)dy\nonumber\\
&&=\intl_{G_{q}(\eta^\perp)}d_{\eta^\perp}Q\intl_{\eta}du \intl_Q f(y+u+v)dy\nonumber\\
&&=\frac{\sig_{q-1}}{\sig_{n-k-1}}\intl_{\eta}du \intl_{\eta^\perp} f(y+u+v)|y|^{k+q-n}dy\nonumber\\
&&=\frac{\sig_{q-1}}{\sig_{n-k-1}}\intl_{\rn} f(x)|x-\t|^{k+q-n}dx=cP_k^q\,f(\eta,v)\,.\nonumber
\eea
where $c=\frac{2^q\pi^q\Gam((n-k)/2)}{\Gam((n-k-q)/2)}$\,. Then we finish the proof.
 \end{proof}
 The proof of Theorem \ref{tr21} implies  the following corollary.
\begin{corollary}\label{tr2}For a suitable function $f=f(x)$ on $\rn$,
\bea\label{tre}
\rpq R_j f(\eta,v)=cP_k^q\,f(\eta,v)\,, \quad c=\frac{2^q\pi^q\Gam((n-k)/2)}{\Gam((n-k-q)/2)}\,,
\eea
provided that either side of this equality exists in the Lebesgue sense.
\end{corollary}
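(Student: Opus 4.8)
The plan is to observe that the identity asserted here, $\rpq R_j f=c\,P_k^q f$, is exactly the core computation already performed inside the proof of Theorem \ref{tr21}; the corollary simply isolates that intermediate step as an independent statement linking $\rpq$, the $j$-plane transform, and the Semyanistyi integral. I would therefore prove it by directly evaluating $\rpq R_j f(\eta,v)$. First I would substitute the definition (\ref{rpq}) of $\rpq$ together with the plane-transform definition (\ref{rtra1kfty}) written for $R_j$. Because the affine $j$-plane $[P,Q]+u+v$ has direction subspace $[P,Q]=P\oplus Q$, its $R_j$-value unfolds as $\intl_P dx\intl_Q f(x+y+u+v)\,dy$, so that $\rpq R_j f(\eta,v)$ becomes an iterated integral over $Q\in G_q(\eta^\perp)$, $P\in G_p(\eta)$, $u\in P^\perp\cap\eta$, $x\in P$ and $y\in Q$.

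The second step collapses the average over $G_p(\eta)$. For every fixed $p$-plane $P\subset\eta$ the orthogonal splitting $\eta=P\oplus(P^\perp\cap\eta)$ and Fubini yield $\intl_{P^\perp\cap\eta}du\intl_P dx\,f(x+u+w)=\intl_\eta du\,f(u+w)$ for any shift $w$, independently of $P$; since $d_\eta P$ has total mass $1$, the integration over $G_p(\eta)$ disappears and leaves $\intl_{G_q(\eta^\perp)}d_{\eta^\perp}Q\intl_\eta du\intl_Q f(y+u+v)\,dy$.

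The third and principal step is the weighted radial reduction of the remaining average over $G_q(\eta^\perp)$. Passing to polar coordinates on the $(n-k)$-dimensional space $\eta^\perp$, the key fact is that averaging the uniform measure of the unit sphere $\bbs^{q-1}$ of a plane $Q$ over $Q\in G_q(\eta^\perp)$ reproduces $\sig_{q-1}/\sig_{n-k-1}$ times the uniform measure on the unit sphere of $\eta^\perp$; this yields $\intl_{G_q(\eta^\perp)}d_{\eta^\perp}Q\intl_Q g(y)\,dy=\frac{\sig_{q-1}}{\sig_{n-k-1}}\intl_{\eta^\perp}g(y)\,|y|^{k+q-n}\,dy$, the exponent $k+q-n$ coming from matching the radial weights $r^{q-1}$ and $r^{n-k-1}$. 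Applying it produces $\rpq R_j f(\eta,v)=\frac{\sig_{q-1}}{\sig_{n-k-1}}\intl_\eta du\intl_{\eta^\perp}f(u+y+v)\,|y|^{k+q-n}\,dy$.

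Finally I would change variables to $x=u+y+v$ along the orthogonal decomposition $\rn=\eta\oplus\eta^\perp$; since $v\in\eta^\perp$, the $\eta^\perp$-component of $x$ minus $v$ is precisely $y$, whence $|y|=\dist(x,\z)=|x-\z|$ for the $k$-plane $\z=(\eta,v)$. The double integral then collapses to $\frac{\sig_{q-1}}{\sig_{n-k-1}}\irn f(x)\,|x-\z|^{k+q-n}\,dx=\frac{\sig_{q-1}}{\sig_{n-k-1}}\,\gam_{n-k}(q)\,P_k^q f(\z)$, and collecting the constants gives $c=\sig_{q-1}\gam_{n-k}(q)/\sig_{n-k-1}$. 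I expect the real work to lie in the third step: the polar-coordinate averaging identity over $G_q(\eta^\perp)$---with its exact power $k+q-n$ and the precise ratio of sphere areas---needs careful justification, while steps one, two and four are routine applications of Fubini and a linear change of variables.
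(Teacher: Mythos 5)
Your proposal is correct and follows essentially the same route as the paper: the paper derives Corollary \ref{tr2} as the chain of equalities inside the proof of Theorem \ref{tr21}, namely unfolding $R_jf$ over $[P,Q]=P\oplus Q$, absorbing the $P$-integration into $\intl_\eta du$ so the $G_p(\eta)$ average drops out, applying the Grassmannian averaging identity $\intl_{G_q(\eta^\perp)}d_{\eta^\perp}Q\intl_Q g(y)\,dy=\frac{\sig_{q-1}}{\sig_{n-k-1}}\intl_{\eta^\perp}g(y)|y|^{k+q-n}dy$, and recombining into an integral over $\rn$ against $|x-\z|^{k+q-n}$. One small point in your favor: carrying out your constant bookkeeping gives $c=\sig_{q-1}\gam_{n-k}(q)/\sig_{n-k-1}=2^q\pi^{q/2}\Gam((n-k)/2)/\Gam((n-k-q)/2)$, i.e.\ $\pi^{q/2}$ rather than the $\pi^q$ printed in the statement, which appears to be a typo in the paper.
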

\begin{remark}  ${}$

\vskip 0.2 truecm

\noindent{1.}
In the case of $q=0$, Strichartz transform $\rpq$ reduces to ``inclusion" transform $\rjk$. Then  the formula (\ref{tre}) reads
\bea
\rjk R_j f(\eta,v)=R_k f(\eta,v)\,.
\eea

\vskip 0.2 truecm

\noindent{2.}
In the case of $p=0$, Strichartz transform $\rpq$ reduces to ``Gonzalez" transform $R_j^k$. Then  the formula (\ref{tre}) reads
\bea
R_j^k R_j f(\eta,v)=cP^j_k\,f(\eta,v)\,, \quad \text{where}~ c=\frac{2^j\pi^j\Gam((n-k)/2)}{\Gam((n-k-j)/2)}\,.
\eea
\end{remark}
Analogue statements still hold for the transform $\rpl$ by interchanging the index  $j$ and $k$, $q$ and $l$. In  particular, we have the following conclusions:

\noindent{\bf 1.} For a suitable function $g=g(\z)$ on $\agnk$,
\bea
P_j^{\a*}\rpl g =c P_k^{(\a+l)*}g,\quad
R_j^*\rpl g=c P_k^{l*} g\,,
\eea
where $c=\frac{2^q \pi^{q/2}\Gam((n-k)/2)}{\Gam((n-k-q)/2)}$.

\noindent{\bf 2.} For a suitable function $f=f(x)$ on $\rn$,
\bea
\rpl P^{\a}_k f(\xi,u)=cP_j^{l+\a}\,f(\xi,u)\,, \quad \rpl R_k f(\xi,u)=cP_j^{l}\,f(\xi,u)\,.
\eea
where $c=\frac{2^l\pi^l\Gam((n-j)/2)}{\Gam((n-j-l)/2)}$.
\subsection{Fuglede Type Equalities and Inversion Formulas}
\begin{theorem}
\label{if}
For a good function $f=f(x)$ on $\rn$, we have the following two formulas,
\vskip 0.2 truecm

\noindent {\rm (i)}\vspace{-1.7em}
\bea\label{rik0}
P_k^{\beta*}\rpq P^\a_j f(x)=cI_n^{\a+\beta+j+l}f(x)=cI_n^{\a+\beta+k+q}f(x)\,,
\eea

\noindent {\rm (ii)}\vspace{-1.7em}
\bea\label{rik00}
P_j^{\a*}\rpl P^\beta_k f(x)=cI_n^{\a+\beta+j+l}f(x)=cI_n^{\a+\beta+k+q}f(x)\,,
\eea
\noindent where $c=\frac{2^{j+l} \pi((j+l)/2)\Gam(n/2)}{\Gam((n-j-l)/2)}$\,,
both equalities hold provided that the Riesz potential $I_n^{\a+\beta+k+q} f$ exists in the Lebesgue sense.
\end{theorem}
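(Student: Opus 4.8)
The plan is to collapse each threefold composition into an iterated Semyanistyi integral living on a \emph{single} Grassmann fiber, and then finish with the semigroup law for Riesz potentials together with the Fuglede relation \eqref{sym3}. For part (i) I would first strip off the inner operator by Theorem \ref{tr21}, which gives $\rpq P_j^\a f = c_1\, P_k^{q+\a} f$ with $c_1$ the constant of that theorem, so that
\[
P_k^{\beta*}\rpq P_j^\a f = c_1\, P_k^{\beta*}P_k^{q+\a}f .
\]
The surviving factor is a composition of a Semyanistyi integral with its dual \emph{on the same} $(n-k)$-fiber, and this is exactly what the known identities control: by the two halves of \eqref{syme} and the semigroup property $I_{n-k}^{\beta}I_{n-k}^{q+\a}=I_{n-k}^{\a+\beta+q}$ of the fiber Riesz potentials,
\[
P_k^{\beta*}P_k^{q+\a}f = R_k^* I_{n-k}^{\beta}I_{n-k}^{q+\a}R_k f = R_k^* I_{n-k}^{\a+\beta+q}R_k f = R_k^* P_k^{\a+\beta+q}f ,
\]
after which \eqref{sym3} yields $R_k^*P_k^{\a+\beta+q}f = c_{k,n}I_n^{\,\a+\beta+k+q}f$. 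The product $c_1 c_{k,n}$ is then evaluated by elementary $\Gamma$- and power-bookkeeping to $\frac{2^{k+q}\pi^{(k+q)/2}\Gam(n/2)}{\Gam((n-k-q)/2)}$, which is the asserted constant $c$ once one uses $k+q=p+l+q=j+l$. This already produces the middle member $c\,I_n^{\a+\beta+k+q}f$.

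The two displayed exponents coincide as operators, since $j=p+q$ and $k=p+l$ force $j+l=k+q$, so a single reduction settles the theorem. It is nevertheless worth recording the \emph{second} route, in which one peels off the \emph{outer} operator by Theorem \ref{tr0}, $P_k^{\beta*}\rpq f = c'\,P_j^{(\beta+l)*}f$, reducing the problem to a composition on the $j$-fiber: the identical manipulation with \eqref{syme}, the Riesz semigroup, and the $j$-analogue of \eqref{sym3} gives $P_j^{(\beta+l)*}P_j^\a f = c_{j,n}I_n^{\a+\beta+j+l}f$, and $c'c_{j,n}$ again works out to $c$. The agreement of the two routes in both exponent and constant is precisely the content of the double equality in \eqref{rik0}.

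Part (ii) is the mirror image under $j\leftrightarrow k$, $q\leftrightarrow l$. Using the $\rpl$-analogues of Theorems \ref{tr21} and \ref{tr0} (obtained by that interchange), one has $\rpl P_k^\beta f = c'\,P_j^{l+\beta}f$ and $P_j^{\a*}\rpl g = c\,P_k^{(\a+q)*}g$; substituting either of these reduces $P_j^{\a*}\rpl P_k^\beta f$ to an iterated Semyanistyi integral on a single fiber, handled verbatim as above and producing $c\,I_n^{\a+\beta+j+l}f=c\,I_n^{\a+\beta+k+q}f$ with the same constant.

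The only genuinely delicate point is the justification, not the algebra: every cited identity holds merely ``provided either side exists in the Lebesgue sense,'' so I must license the whole chain of interchanges from the single hypothesis that $I_n^{\a+\beta+k+q}f$ exists. I would do this by first running the argument for $f\ge 0$, where each operator above is a positive integral operator, Tonelli applies at every step, and the finiteness of the terminal potential $I_n^{\a+\beta+k+q}|f|$ forces $R_kf$, the fiberwise Riesz potentials, and the dual transforms all to be finite almost everywhere with freely interchangeable orders of integration; the general complex-valued case then follows by decomposition into positive and negative, real and imaginary parts. Along the way one must also check that the intermediate fractional orders $q+\a$ and $\a+\beta+q$ (respectively $l+\beta$ and $\a+\beta+l$) remain in the admissibility ranges of \eqref{syme}, \eqref{sym3} and the semigroup law, which is where the standing hypotheses $p,q,l>0$ and $p+q+l<n$ enter.
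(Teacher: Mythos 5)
Your proof is correct and follows essentially the same route as the paper: the paper likewise collapses the threefold composition by means of the intertwining relations of Section 5 (Theorems \ref{tr0} and \ref{tr21} and their corollaries), the Semyanistyi--Riesz identities, the Riesz semigroup property and the Fuglede formula, merely declaring the conclusion ``obvious'' from those ingredients after reducing to the $R_k^*\,\rpq\, R_j$ version. Your closing Tonelli/positivity argument justifying the interchanges under the single hypothesis that $I_n^{\a+\beta+k+q}f$ exists is more explicit than anything the paper provides.
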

From the relationship between Semyanistyi integrals and Radon-John transforms, we just need to prove the following theorem.
\begin{theorem}\label{if}
For a good function $f=f(x)$ on $\rn$,
\vskip 0.2 truecm

\noindent {\rm (i)}\vspace{-1.5em}
\bea\label{rik}
R_k^*\rpq R_j f(x)=cI_n^{j+l}f(x)=cI_n^{k+q}f(x)\,,
\eea
\vskip 0.2 truecm

\noindent {\rm (ii)}\vspace{-1.5em}
\bea\label{rik0}
R_j^*\rpl R_k f(x)=cI_n^{j+l}f(x)=cI_n^{k+q}f(x)\,,
\eea
where $ c=\frac{2^{j+l} \pi^{(j+l)/2}\Gam(n/2)}{\Gam((n-j-l)/2)}$, both equalities hold provided that the Riesz potential $I_n^{k+q} f$ exists in the Lebesgue sense.
\end{theorem}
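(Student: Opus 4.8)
The plan is to obtain each identity by composing two results already established: the intertwining relation of Corollary~\ref{tr2}, which rewrites $\rpq R_j f$ as a single Semyanistyi integral, and the Fuglede-type identity~(\ref{sym3}), which collapses a dual $k$-plane transform of a Semyanistyi integral into a Riesz potential on $\rn$. No new integration over Grassmannians is needed; everything reduces to these two cited formulas followed by an elementary simplification of Gamma factors.

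For part~(i) I would first invoke Corollary~\ref{tr2}, in the form~(\ref{tre}), to write $\rpq R_j f = c_1\, P_k^q f$ with $c_1 = 2^q\pi^{q/2}\Gam((n-k)/2)/\Gam((n-k-q)/2)$. Applying the dual $k$-plane transform $R_k^*$ and then~(\ref{sym3}), $R_k^* P_k^q f = c_{k,n}\, I_n^{k+q} f$ with $c_{k,n}=2^k\pi^{k/2}\Gam(n/2)/\Gam((n-k)/2)$, gives
\[
R_k^*\rpq R_j f = c_1\, c_{k,n}\, I_n^{k+q} f.
\]
Cancelling $\Gam((n-k)/2)$ in the product $c_1 c_{k,n}$ leaves $2^{k+q}\pi^{(k+q)/2}\Gam(n/2)/\Gam((n-k-q)/2)$. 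Since $j=p+q$ and $k=p+l$ force $k+q=j+l$, this equals the asserted $c$, and $I_n^{k+q}=I_n^{j+l}$, so both displayed forms of the right-hand side hold at once.

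Part~(ii) runs along identical lines after the substitution $j\leftrightarrow k$, $q\leftrightarrow l$, under which $\rpq$ becomes $\rpl$, $R_j$ becomes $R_k$, and $R_k^*$ becomes $R_j^*$: Corollary~\ref{tr2} then reads $\rpl R_k f = c_1'\, P_j^l f$ with $c_1'=2^l\pi^{l/2}\Gam((n-j)/2)/\Gam((n-j-l)/2)$, and~(\ref{sym3}) with $k$ replaced by $j$ and $\a=l$ gives $R_j^* P_j^l f = c_{j,n}\, I_n^{j+l} f$; the same cancellation reproduces $c$. Equivalently, (ii) is the formal adjoint of (i): pairing against a test function and using the duality $\langle\rpq F,G\rangle=\langle F,\rpl G\rangle$ of Theorem~\ref{dut} together with the self-duality of the pairs $(R_k,R_k^*)$ and $(R_j,R_j^*)$ shows that $R_j^*\rpl R_k$ is the transpose of $R_k^*\rpq R_j$, whence (ii) follows from (i) because $I_n^{j+l}$ is self-adjoint.

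The algebra is routine; the step requiring genuine care, and the one I expect to be the main obstacle, is the Lebesgue-sense bookkeeping. One must check that the single standing hypothesis, finiteness of $I_n^{k+q}f$, propagates backwards so that every intermediate object, $R_j f$, then $\rpq R_j f = c_1 P_k^q f$, and finally its dual $k$-plane transform, is finite almost everywhere and that the interchanges of integration built into the cited formulas are legitimate. The standard device resolves this: first run the entire chain for $f\ge 0$, where each iterated integral is well-defined in $[0,\infty]$ and Tonelli's theorem licenses every reordering, so that finiteness of the right-hand side $I_n^{k+q}f$ forces finiteness of each term on the left; then split a general $f$ into positive and negative parts and apply Fubini. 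This matches the way Corollary~\ref{tr2} and~(\ref{sym3}) were themselves phrased, ``provided either side exists in the Lebesgue sense'', so the existence clauses chain together with no extra hypotheses.
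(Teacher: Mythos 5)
Your proof is correct and follows essentially the same route the paper itself indicates: the paper's proof consists of the remark that the theorem follows from Corollary~\ref{tr1} together with Theorems~\ref{fug} and~\ref{fug2}, ``or through Lemma~\ref{tr2} and Theorem~\ref{fug}'', and your argument is precisely this second route (Corollary~\ref{tr2} composed with the Fuglede identity~(\ref{sym3})), with part~(ii) obtained by the same symmetry the paper relies on. Incidentally, the constant $c_1=2^q\pi^{q/2}\Gamma((n-k)/2)/\Gamma((n-k-q)/2)$ you use is the one consistent with the final value of $c$; the paper's statement of Theorem~\ref{tr21} and Corollary~\ref{tr2} writes $\pi^{q}$ where $\pi^{q/2}$ is meant.
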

This Theorem is obvious through  the  Lemma \ref{tr1},  Theorem \ref{fug} and Theorem \ref{fug2}.
It can also be obtained through Lemma \ref{tr2} and Theorem \ref{fug}\,.
\begin{remark}${}$\hfill

\vskip 0.2 truecm

\noindent {1. }
When $q=0$, in the meantime $j=p$, our formula (\ref{rik}) coincides with equality (\ref{roiw}) of the inclusion transform case.
\vskip 0.2 truecm

\noindent {2. }When $p=0$, in the meantime $j=q$, our formula (\ref{rik}) coincides with equality (\ref{heit1}) of the Gonzalez transform case.

\end{remark}
Theorem \ref{if} enables us to reconstruct $f$ from $\rpq f$ provided that $f$ belongs to  the range of the $j$-plane transform.

\begin{theorem} \label{qqww}  Let $f \!=\! R_j h$, $h\in L^s (\rn)$. If  $1\le s <n/(j+l)$, then
\be\label{ktyu88} f\! =\! c^{-1} R_j \bbd_n^{j+l} R_k^*\, \rpq f,  \qquad  c=\frac{2^{l+j} \pi^{(j+l)/2}\Gam(n/2)}{\Gam((n-j-l)/2)} \ee
where $\bbd_n^{j+l}$ is the Riesz fractional derivative (\ref{invs1a}). More generally, if $0<\a<n-j-l$ and  $1\le s <n/(j+l+\a)$,
 then
\be\label{ktyu88a} f\! =\! c^{-1} R_j \bbd_n^{j+l+\a} R_k^*\, I_{n-k}^\a  \rpq f\ee
with the same constant $c$.
\end{theorem}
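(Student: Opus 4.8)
The plan is to obtain both inversion formulas as immediate consequences of the Fuglede type identity in Theorem~\ref{if}, by inverting the Riesz potential that appears on its right-hand side. Writing $f=R_j h$ with $h\in L^s(\rn)$, I would first observe that the bound $1\le s<n/(j+l)$ is exactly what guarantees $I_n^{j+l}h<\infty$ almost everywhere, by the mapping property of Riesz potentials recalled after (\ref{rpot}); this is the hypothesis under which Theorem~\ref{if} is applicable.

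To prove (\ref{ktyu88}), I would apply part~(i) of Theorem~\ref{if} to the function $h$, which gives
\[
R_k^*\rpq f = R_k^*\rpq R_j h = c\,I_n^{j+l}h,\qquad c=\frac{2^{j+l}\pi^{(j+l)/2}\Gam(n/2)}{\Gam((n-j-l)/2)}.
\]
Next I would apply the Riesz fractional derivative $\bbd_n^{j+l}$, which by the inversion formula (\ref{invs1a}) is a left inverse of $I_n^{j+l}$ acting on $L^s(\rn)$, to deduce $\bbd_n^{j+l}R_k^*\rpq f = c\,h$. Applying $R_j$ to both sides and dividing by $c$ then recovers $f=c^{-1}R_j\bbd_n^{j+l}R_k^*\rpq f$, as claimed.

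For the more general formula (\ref{ktyu88a}) I would insert the additional potential $I_{n-k}^\a$ and commute it inward. By Theorem~\ref{dwalg2} one has $I_{n-k}^\a\rpq = \rpq I_{n-j}^\a$, and by the first identity of Theorem~\ref{fug2} (with $k$ replaced by $j$) together with the semigroup law $I_n^{j+l}I_n^\a=I_n^{j+l+\a}$,
\[
R_k^* I_{n-k}^\a\rpq R_j h = R_k^*\rpq I_{n-j}^\a R_j h = R_k^*\rpq R_j I_n^\a h = c\,I_n^{j+l+\a}h,
\]
the last step being Theorem~\ref{if} applied to $I_n^\a h$. Inverting with $\bbd_n^{j+l+\a}$ and then applying $R_j$ yields (\ref{ktyu88a}). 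I should note that the admissibility of Theorem~\ref{dwalg2} here requires $0<\a<n-k-q$, which is the same as the stated constraint $0<\a<n-j-l$ because $p+q=j$ and $p+l=k$ force $j+l=k+q$.

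The algebraic chaining and the computation of the constant are routine, so the main obstacle I anticipate is the bookkeeping of existence conditions: each of Theorems~\ref{if}, \ref{dwalg2} and~\ref{fug2} holds only when one of its sides is finite in the Lebesgue sense, so I would have to check that the single hypothesis $1\le s<n/(j+l)$ (respectively $1\le s<n/(j+l+\a)$) propagates through the whole chain. Concretely, it must simultaneously ensure that $I_n^{j+l}h$ (respectively $I_n^{j+l+\a}h$) exists, that $R_k^*\rpq R_j h$ is finite almost everywhere so that Theorem~\ref{if} legitimately applies, and that the left-inversion $\bbd_n^{j+l}I_n^{j+l}h=h$ holds in the $L^s$ and almost-everywhere senses promised by (\ref{invs1a}).
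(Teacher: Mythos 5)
Your argument is correct, and for the first formula (\ref{ktyu88}) it coincides exactly with the paper's proof: apply (\ref{rik}) to $f=R_jh$, invert the resulting potential $c\,I_n^{j+l}h$ with $\bbd_n^{j+l}$, and apply $R_j$. For the second formula (\ref{ktyu88a}) you take a genuinely different commutation route. The paper works \emph{outward}: it first forms $c^{-1}I_n^\a R_k^*\rpq R_jh=I_n^{j+l+\a}h$ by the semigroup property and then uses only the second identity in (\ref{ktyu}), $R_k^*I_{n-k}^\a\psi=I_n^\a R_k^*\psi$, to convert $I_n^\a R_k^*$ into $R_k^*I_{n-k}^\a$. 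You instead push the potential \emph{inward}, using Theorem \ref{dwalg2} to write $I_{n-k}^\a\rpq=\rpq I_{n-j}^\a$ and then the first identity of (\ref{ktyu}) to write $I_{n-j}^\a R_j=R_jI_n^\a$, finally applying Theorem \ref{if} to $I_n^\a h$. Both chains land on $R_k^*I_{n-k}^\a\rpq f=c\,I_n^{j+l+\a}h$ under the hypothesis $1\le s<n/(j+l+\a)$, and your observation that $n-k-q=n-j-l$ (so the admissibility range for Theorem \ref{dwalg2} matches the stated one) is the right consistency check. The trade-off is that the paper's route needs a single intertwining relation and no appeal to Theorem \ref{dwalg2}, so there is one fewer ``either side exists in the Lebesgue sense'' hypothesis to track, whereas your route makes visible that the extra smoothing can equivalently be placed on the source side as $I_{n-j}^\a$ acting on $R_jh$; your closing remarks on propagating the single integrability hypothesis through the chain address exactly the point where the two routes differ in bookkeeping burden.
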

\begin{proof} By (\ref{rik}),
\[
c^{-1} R_j \bbd_n^{j+l} R_k^* \rpq f=c^{-1} R_j \bbd_n^{j+l} R_k^* \rpq R_j h=R_j \bbd_n^{j+l} I_n^{j+l} h= R_j h=f.\]
Further, combining (\ref{rik}) with the semigroup property of Riesz potentials, we obtain
\[
c^{-1} I_n^\a R_k^* \,\rpq f \!=\! c^{-1} I_n^\a  R_k^* \,\rpq R_j h = I_n^\a  I_n^{j+l} h =   I_n^{j+l+\a} h.\]
However, by (\ref{ktyu}), $ I_n^\a  R_k^* \,\rpq f= R_k^* I_{n-k}^\a  \,\rpq f$. Hence
\[ I_n^{j+l+\a} h=  c^{-1} R_k^* I_{n-k}^\a  \,\rpq f,\]
and therefore $h= c^{-1} \bbd_n^{j+l+\a}R_k^* I_{n-k}^\a  \,\rpq f$. Applying $R_j$ to both sides, we obtain (\ref{ktyu88a}).
\end{proof}
At the case  $p=0$ and $q=0$, these inversion formulas coincide with the Theorem 5.1 and Theorem 5.2 in \cite{RW2}, respectively.
Interchanging the index  $j,k$ and $q,l$, we have the following inversion formulas for transform $\rpl$;
\begin{theorem} \label{qqww1}  Let $g \!=\! R_k h$, $h\in L^t (\rn)$. If  $1\le t <n/(k+q)$, then
\be\label{ktyu880} g\! =\! c^{-1} R_k \bbd_n^{k+q} R_j^*\, \rpl g,  \qquad  c=\frac{2^{k+q} \pi^{(k+q)/2}\Gam(n/2)}{\Gam((n-k-q)/2)} \ee
where $\bbd_n^{k+q}$ is the Riesz fractional derivative (\ref{invs1a}). More generally, if $0<\a<n-k-q$ and  $1\le t <n/(k+q+\a)$,
 then
\be\label{ktyu88a0} g\! =\! c^{-1} R_k \bbd_n^{k+q+\a} R_j^*\, I_{n-j}^\a  \rpl g\,,\ee
with the same constant $c$.
\end{theorem}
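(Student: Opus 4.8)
The plan is to mirror the proof of Theorem \ref{qqww}, interchanging the roles of $j$ and $k$ and of $q$ and $l$; the only ingredients are the second Fuglede-type identity of Theorem \ref{if}, namely $R_j^*\,\rpl R_k h = c\,I_n^{k+q} h$ (equation (\ref{rik0})), the semigroup property of the Riesz potentials, the commutation relation of Theorem \ref{fug2}, and the fact that $\bbd_n^{k+q}$ is a left inverse of $I_n^{k+q}$ (the theorem containing (\ref{invs1a})). First I would check that the standing hypotheses make every one of these tools applicable, and then substitute them in turn.

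First I would record that, since $h\in L^t(\rn)$ with $1\le t<n/(k+q)$, the Riesz potential $I_n^{k+q} h$ exists for almost all $x$ (the sharp range for $I_n^\a$ on $L^p$, see \cite[Chapter III]{Ru3}); this is exactly the existence hypothesis demanded in Theorem \ref{if}, so the identity $R_j^*\,\rpl R_k h = c\,I_n^{k+q} h$ holds in the Lebesgue sense. Using $j+l=k+q$ (which follows from $p+q=j$, $p+l=k$), the constant $c$ in that identity is precisely the constant appearing in the statement. For the first formula (\ref{ktyu880}) I would then substitute $g=R_k h$ and compute
\[
c^{-1} R_k\,\bbd_n^{k+q} R_j^*\,\rpl g = c^{-1} R_k\,\bbd_n^{k+q}\bigl(R_j^*\,\rpl R_k h\bigr)= R_k\,\bbd_n^{k+q} I_n^{k+q} h = R_k h = g,
\]
where the penultimate step uses the Fuglede identity and the final step uses $\bbd_n^{k+q} I_n^{k+q} h=h$. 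Note that $R_k$ sits on the outside and is applied only after the fractional derivative has recovered $h$, so no commutation between $R_k$ and $\bbd_n^{k+q}$ is required.

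For the more general formula (\ref{ktyu88a0}), with $0<\a<n-k-q$ and $1\le t<n/(k+q+\a)$, I would first apply $I_n^\a$ and use the semigroup property $I_n^\a I_n^{k+q}=I_n^{k+q+\a}$:
\[
c^{-1} I_n^\a R_j^*\,\rpl g = c^{-1} I_n^\a\bigl(R_j^*\,\rpl R_k h\bigr)= I_n^\a I_n^{k+q} h = I_n^{k+q+\a} h.
\]
Then, invoking the intertwining relation of Theorem \ref{fug2} with the index $j$ in place of $k$, namely $I_n^\a R_j^*\,\psi = R_j^* I_{n-j}^\a\,\psi$, the left side becomes $c^{-1} R_j^* I_{n-j}^\a\,\rpl g$, so $I_n^{k+q+\a} h = c^{-1} R_j^* I_{n-j}^\a\,\rpl g$. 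Applying the left inverse $\bbd_n^{k+q+\a}$ gives $h = c^{-1}\bbd_n^{k+q+\a} R_j^* I_{n-j}^\a\,\rpl g$, and applying $R_k$ to both sides yields (\ref{ktyu88a0}).

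The only point requiring care, and the place where the sharp exponent bounds enter, is ensuring that the $L^t$ hypotheses are exactly strong enough to guarantee the existence of $I_n^{k+q} h$ (respectively $I_n^{k+q+\a} h$) and hence the legitimacy of applying the left inverse $\bbd_n^{k+q}$ (respectively $\bbd_n^{k+q+\a}$) via the theorem underlying (\ref{invs1a}); everything else is a formal substitution dictated by the $j\leftrightarrow k$, $q\leftrightarrow l$ symmetry, so I do not expect any genuine obstacle beyond this bookkeeping of existence conditions.
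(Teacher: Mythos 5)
Your proposal is correct and matches the paper's approach exactly: the paper derives Theorem \ref{qqww1} from Theorem \ref{qqww} by the same $j\leftrightarrow k$, $q\leftrightarrow l$ interchange, and your argument is precisely the proof of Theorem \ref{qqww} (Fuglede identity (\ref{rik0}), semigroup property, intertwining relation (\ref{ktyu}), and the left inverse $\bbd_n^{k+q}$) written out with the indices swapped. The observation that $j+l=k+q$ reconciles the constants, and the existence bookkeeping you flag is exactly what the hypotheses are designed to guarantee.
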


\section*{Acknowledgements} This work was originate from the valuable discussion with Professor  Boris Rubin. The author is deeply grateful to
Boris for  helpful suggestions and encouragement  on this subject.


\end{document}